\newtheorem{Defn}{Definition}[section]
\newtheorem{Lemma}[Defn]{Lemma}
\newtheorem{prop}[Defn]{Proposition}
\newtheorem{theorem}[Defn]{Theorem}
\newtheorem{remark}[Defn]{Remark}
\newtheorem{Corollary}[Defn]{Corollary}
\newtheorem{Example}[Defn]{Example}
\title{Homology theory valued in the category of bicommutative Hopf algebras}
\author{MINKYU KIM}
\date{}
\begin{document}

\maketitle

\begin{abstract}
The codomain category of a generalized homology theory is the category of modules over a ring.
For an abelian category $\mathcal{A}$, an $\mathcal{A}$-valued (generalized) homology theory is defined by formally replacing the category of modules with the category $\mathcal{A}$.
It is known that the category of bicommutative (i.e. commutative and cocommutative) Hopf algebras over a field $k$ is an abelian category.
Denote the category by $\mathsf{Hopf}^\mathsf{bicom}_k$.
In this paper, we give some ways to construct $\mathsf{Hopf}^\mathsf{bicom}_k$-valued homology theories.
As a main result, we give $\mathsf{Hopf}^\mathsf{bicom}_k$-valued homology theories whose coefficients are neither group Hopf algebras nor function Hopf algebras.
The examples contain not only ordinary homology theories but also extraordinary ones.
\end{abstract}

\tableofcontents

In a functorial point of view, the codomain of a generalized homology theory is the category of abelian groups, $\mathsf{Ab}$, or more generally the category of modules over a ring $R$, $\mathsf{Mod}_R$.
For an abelian category $\mathcal{A}$, an $\mathcal{A}$-valued (generalized) homology theory is defined by formally replacing the category of modules with the category $\mathcal{A}$ (see Definition \ref{201912312136}).
With this terminology, a generalized homology theory is an $\mathsf{Ab}$-valued homology theory or $\mathsf{Mod}_R$-valued homology theory.

The category of bicommutative Hopf algebras over a field $k$ is an abelian category \cite{newman1975correspondence} \cite{takeuchi1972correspondence}.
Denote the category by $\mathsf{Hopf}^\mathsf{bicom}_k$.
The main purpose of this paper is to give a way to construct $\mathsf{Hopf}^\mathsf{bicom}_k$-valued homology theories which are {\it nontrivial} in the following sense.
A group induces a Hopf algebra called a group Hopf algebra.
Furthermore, a finite group induces a Hopf algebra called a function Hopf algebra.
A $\mathsf{Hopf}^\mathsf{bicom}_k$-valued homology theory is {\it trivial} if each coefficient Hopf algebra is a group Hopf algebra or a function Hopf algebra.

\begin{theorem}
For a bicommutative Hopf algebra $A$ over a field $k$, there exists an ordinary homology theory $E_\bullet$ such that $E_0 ( \mathrm{pt} ) \cong A$.
\end{theorem}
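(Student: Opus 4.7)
The plan is to mimic the classical construction of ordinary homology with coefficients: realize $E_\bullet$ as the homology, taken inside the abelian category $\mathsf{Hopf}^{\mathsf{bicom}}_k$, of a chain complex obtained by applying a coefficient functor $\Phi_A \colon \mathsf{Ab} \to \mathsf{Hopf}^{\mathsf{bicom}}_k$ to the singular (or cellular) chain complex of $X$. The functor $\Phi_A$ should play the role of $-\otimes_{\mathbb{Z}} G$ in the classical setting, so that $\Phi_A(\mathbb{Z}) \cong A$.

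To build $\Phi_A$, I would use that $\mathsf{Hopf}^{\mathsf{bicom}}_k$, being abelian, is enriched in abelian groups and has finite biproducts; since it is also cocomplete, arbitrary coproducts $A^{\sqcup S}$ (given by filtered tensor products of bicommutative Hopf algebras) exist. On a free abelian group $\mathbb{Z}^{(S)}$, set $\Phi_A(\mathbb{Z}^{(S)}) := A^{\sqcup S}$; a $\mathbb{Z}$-linear map between free abelian groups, encoded as a column-finite integer matrix, is sent to the corresponding matrix of Hopf morphisms via the canonical $\mathbb{Z}$-module structure on $\mathrm{Hom}_{\mathsf{Hopf}^{\mathsf{bicom}}_k}(A,A)$ (whose ring structure under convolution uses bicommutativity). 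For an arbitrary abelian group $G$ with presentation $\mathbb{Z}^{(S_1)} \to \mathbb{Z}^{(S_0)} \to G \to 0$, define $\Phi_A(G)$ as the cokernel in $\mathsf{Hopf}^{\mathsf{bicom}}_k$ of the induced morphism $A^{\sqcup S_1} \to A^{\sqcup S_0}$; independence of the presentation is the standard comparison argument in an abelian category.

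Next I would apply $\Phi_A$ levelwise to the singular chain complex $C_\bullet^{\mathrm{sing}}(X;\mathbb{Z})$ and set
\[
E_n(X) \ := \ H_n\bigl( \Phi_A\, C_\bullet^{\mathrm{sing}}(X;\mathbb{Z}) \bigr)
\]
computed in $\mathsf{Hopf}^{\mathsf{bicom}}_k$. Because $\Phi_A$ is additive it sends chain homotopies to chain homotopies (yielding homotopy invariance), and it sends the degreewise split short exact sequence $0 \to C_\bullet(A) \to C_\bullet(X) \to C_\bullet(X,A) \to 0$ of free chain complexes to a short exact sequence in $\mathsf{Hopf}^{\mathsf{bicom}}_k$ (yielding the long exact sequence of a pair); excision follows by transporting the barycentric subdivision argument, and additivity of the theory from additivity of $\Phi_A$ together with commutation with coproducts. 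For the dimension axiom, the singular complex of a point is $\mathbb{Z}$ concentrated in degree $0$, so $E_0(\mathrm{pt}) \cong \Phi_A(\mathbb{Z}) \cong A$ and $E_n(\mathrm{pt}) = 0$ for $n\neq 0$.

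The principal obstacle is the well-definedness and the required exactness of $\Phi_A$ in an abelian category where one cannot invoke a classical tensor product: one must check functoriality on morphisms, independence of the cokernel from the presentation, and that coproducts $A^{\sqcup S}$ interact correctly with the cokernels used to define $\Phi_A$ on non-free groups. Once these abelian-categorical properties of $\Phi_A$ — additivity, preservation of coproducts, and preservation of levelwise split short exact sequences of free abelian groups — are in place, the translation of each Eilenberg–Steenrod axiom from the classical singular setting is formal.
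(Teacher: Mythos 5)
Your proposal is correct in spirit and rests on the same underlying idea as the paper --- exploit the $\mathbb{Z}$-linear enrichment of the abelian category $\mathsf{Hopf}^{\mathsf{bicom}}_k$ to turn the integer matrices of a chain complex of free abelian groups into morphisms between (co)powers of $A$, and then take homology inside $\mathsf{Hopf}^{\mathsf{bicom}}_k$ --- but your implementation differs in a way that matters technically. The paper works with the \emph{cellular} chain complex of a finite CW pair, so each $C^{cell}_q(K;A)=A^{\oplus n_q}$ is a \emph{finite} biproduct (a finite tensor product of Hopf algebras), the boundary maps are the finite integer matrices $[e_q:e_{q-1}]\cdot Id_A$, independence of the CW structure is quoted as formal, and the dimension axiom is immediate. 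Your singular-chain route instead forces you to construct infinite coproducts $A^{\sqcup S}$ over uncountable index sets, to set up a column-finite matrix calculus for morphisms between such coproducts (these are no longer biproducts), and to transport the barycentric subdivision argument for excision; none of this is needed in the paper because Definition \ref{201912312136} only requires the theory on finite CW pairs. Your extension of $\Phi_A$ to arbitrary abelian groups via presentations is likewise unnecessary: the singular (or cellular) complex consists of free abelian groups, and the homology objects are produced by kernels and cokernels taken in $\mathsf{Hopf}^{\mathsf{bicom}}_k$ anyway. One small factual slip: the singular chain complex of a point is not $\mathbb{Z}$ concentrated in degree $0$ but $\mathbb{Z}$ in every nonnegative degree with boundary maps alternating between $0$ and the identity; after applying $\Phi_A$ its homology is still $A$ in degree $0$ and trivial elsewhere, so your conclusion survives, but the stated justification should be corrected (or you should pass to cellular or reduced chains, as the paper effectively does). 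What your approach buys, if carried out, is a theory defined on all topological pairs rather than only finite CW pairs; what the paper's approach buys is that every step reduces to finite biproducts and the elementary ring structure on $\mathrm{End}(A)$, with no cocompleteness or subdivision issues to verify.
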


Denote the ordinary homology theory by $E_\bullet (- ) = H_\bullet ( - ; A)$.
If the Hopf algebra $A$ is not a group Hopf algebra or a function Hopf algebra, the obtained ordinary homology theory is {\it nontrivial}.
The existence of such a Hopf algebra $A$ depends on the ground field $k$.
Some examples are given in subsection \ref{202005031017} and appendix.

On the one hand, we give a way to construct a $\mathsf{Hopf}^\mathsf{bicom}_k$-valued homology theory which is not only nontrivial and possibly {\it extraordinary}.

\begin{theorem}
Let $L$ be a field.
Consider a bicommutative Hopf algebra $A$ over $k$ with an $L$-action $\alpha$.
There exists an assignment of a $\mathsf{Hopf}^\mathsf{bicom}_k$-valued homology theory $(A,\alpha)^{F_\bullet}$ to a $\mathsf{Vec}^\mathsf{fin}_L$-valued homology theory $F_\bullet$.
If $\dim_k A > 1$, then the homology theory $(A,\alpha)^{F_\bullet}$ is ordinary if and only if so is $F_\bullet$.
\end{theorem}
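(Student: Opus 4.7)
The plan is to build an exact additive functor $T_{(A,\alpha)} : \mathsf{Vec}^\mathsf{fin}_L \to \mathsf{Hopf}^\mathsf{bicom}_k$ from the data $(A,\alpha)$, and then define
\[
(A,\alpha)^{F_\bullet} \;=\; T_{(A,\alpha)} \circ F_\bullet.
\]
Because exact additive functors between abelian categories preserve the diagrams that witness the axioms of a generalized homology theory (long exact sequence of a pair, excision, homotopy invariance, and additivity for disjoint unions), postcomposition automatically transports these axioms from $F_\bullet$ to $(A,\alpha)^{F_\bullet}$, yielding a $\mathsf{Hopf}^\mathsf{bicom}_k$-valued homology theory.

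The substantive step is the construction of $T_{(A,\alpha)}$. I would interpret the $L$-action $\alpha$ as a unital ring homomorphism $\alpha : L \to \mathrm{End}_{\mathsf{Hopf}^\mathsf{bicom}_k}(A)$. Using that the tensor product of Hopf algebras realizes the biproduct in the abelian category $\mathsf{Hopf}^\mathsf{bicom}_k$, one sets, after choosing a basis $V \cong L^n$,
\[
T_{(A,\alpha)}(V) \;=\; A^{\otimes n},
\]
and sends an $L$-linear map $f : L^m \to L^n$ with matrix $(f_{ij})$ to the matrix of Hopf-endomorphisms $\bigl(\alpha(f_{ij})\bigr)$ between the biproduct components. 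A basis-free formulation would characterize $T_{(A,\alpha)}$ as the unique $L$-linear additive functor from $\mathsf{Vec}^\mathsf{fin}_L$ to the $L$-linear category of $L$-module objects in $\mathsf{Hopf}^\mathsf{bicom}_k$ that sends $L$ to $(A,\alpha)$. Exactness is then automatic, since every short exact sequence in $\mathsf{Vec}^\mathsf{fin}_L$ splits and $T_{(A,\alpha)}$ preserves biproducts by construction.

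For the equivalence of ordinariness, recall that $F_\bullet$ is ordinary iff $F_n(\mathrm{pt}) = 0$ for $n \neq 0$, and similarly $(A,\alpha)^{F_\bullet}$ is ordinary iff $T_{(A,\alpha)}\bigl(F_n(\mathrm{pt})\bigr) \cong k$ (the zero object of $\mathsf{Hopf}^\mathsf{bicom}_k$) for $n \neq 0$. The forward implication is immediate, because additive functors preserve zero objects. For the converse, it is enough to check that $T_{(A,\alpha)}(V) \cong k$ forces $V = 0$ under the hypothesis $\dim_k A > 1$; writing $V \cong L^n$, one has $\dim_k T_{(A,\alpha)}(V) = (\dim_k A)^n$, which equals $1$ only when $n = 0$.

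The main obstacle I anticipate is giving $T_{(A,\alpha)}$ a genuinely intrinsic description and checking that the matrix formulas above assemble into bona fide morphisms in $\mathsf{Hopf}^\mathsf{bicom}_k$ rather than in the ambient $k$-linear setting. This reduces to a compatibility between the biproduct structure on $\mathsf{Hopf}^\mathsf{bicom}_k$ and the ring structure on $\mathrm{End}(A)$ induced by $\alpha$, which should be a formal consequence of the abelian-category structure of $\mathsf{Hopf}^\mathsf{bicom}_k$ together with the universal property of free $L$-modules of finite rank.
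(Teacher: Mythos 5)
Your proposal is correct and follows essentially the same route as the paper: define an exact additive functor $(A,\alpha)^{(-)}:\mathsf{Vec}^\mathsf{fin}_L\to\mathsf{Hopf}^\mathsf{bicom}_k$ sending $L^{\oplus n}\mapsto A^{\otimes n}$, deduce exactness from the fact that every short exact sequence in $\mathsf{Vec}^\mathsf{fin}_L$ splits, transport the homology axioms by postcomposition, and settle the ordinariness equivalence by the dimension count $\dim_k A^{\otimes n}=(\dim_k A)^n$, which equals $1$ only for $n=0$ when $\dim_k A>1$. The only notable difference is that you build the functor by hand from the biproduct structure of $\mathsf{Hopf}^\mathsf{bicom}_k$ (matrices of Hopf endomorphisms via $\alpha$), whereas the paper obtains it by citing the correspondence between Hopf algebras with an $R$-action and exponential (symmetric monoidal) functors $(\mathsf{P}_R,\oplus)\to(\mathsf{Vec}_k,\otimes)$ due to Touz\'e; your direct construction is more self-contained but yields the same functor.
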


In our subsequent paper, we show that a $\mathsf{Hopf}^\mathsf{bicom}_k$-valued homology theory induces a (possibly, empty) family of TQFT's.
This paper provides a class of examples to apply the results.
Especially, this paper makes sure that our subsequent result is a strict generalization of abelian Dijkgraaf-Witten \cite{DW} \cite{FQ} and bicommutative Turaev-Viro \cite{TV} \cite{BW}.

For applications in our subsequent paper, we go further.
We give a way to construct a nontrivial $\mathsf{Hopf}^\mathsf{bicom}_k$-valued homology which assigns finite-dimensional {\it bisemisimple} Hopf algebras to any spaces.
See Definition \ref{202005081617} for bisemsimple Hopf algebras.
The existence of such homology theories depends on the ground field $k$.
Our main examples contain $k = \mathbb{Q} , \mathbb{R}$ and finite fields.
On the other hand, for the case $k = \mathbb{C}$ (more generally, algebraically closed field), all the $\mathsf{Hopf}^\mathsf{bicom}_k$-valued homology theories are {\it trivial} in the above sense.

In section \ref{202005071347}, we give examples of bicommutative Hopf algebras in a systematic way.
In subsection \ref{202005071351}, we give Hopf algebras induced by a group.
In particular, we compare the category of abelian groups and the category of finite-dimensional (co)semisimple bicommutative Hopf algebras.
In subsection \ref{202005031017}, we discuss a way to obtain bicommutative Hopf algebras which are not induced by groups as above.
Moreover, we give some conditions that such Hopf algebras be bisemisimple for the purpose of our subsequent paper.

In section \ref{202005071350}, we give three ways to construct $\mathsf{Hopf}^\mathsf{bicom}_k$-valued homology theories.
In subsection \ref{202005031015}, we use the group Hopf algebra functor so that the obtained homology theories are {\it trivial} in the above sense.
In subsection \ref{202005071101}, we construct an ordinary homology theory with coefficients in an arbitrary bicommutative Hopf algebra.
In subsection \ref{202005071102}, we use an exponential functor induced by a bicommutative Hopf algebra equipped with an action of a field.
We show that a bicommutative Hopf algebra $A$ over $k$ with an $L$-action gives a way to construct $\mathsf{Hopf}^\mathsf{bicom}_k$-valued homology theory from a $\mathsf{Vec}^\mathsf{fin}_L$-valued homology theory where $L$ is another field.
The examples in subsection \ref{202005071102} contain some extraordinary homology theories which are {\it nontrivial}.

\section*{Acknowledgement}
Takeuchi (University of Tokyo) informed the author of the existence of nontrivial Hopf algebra structures on $\mathbb{F}_3 \oplus \mathbb{F}_9$ and $\mathbb{F}_3\oplus \mathbb{F}_9\oplus \mathbb{F}_9$ based on a commutative algebra analogue of the Gelfand duality.
The author had a helpful discussion with Wakatsuki (University of Tokyo) about the nontriviality of Hopf algebra.
The author appreciates them all.

\section{Bicommutative Hopf algebras}
\label{202005071347}

\subsection{Group Hopf algebras and function Hopf algebras}
\label{202005071351}

Let $k$ be a field.
For a finite group $G$, we denote by $k G$ the group Hopf algebra : the algebra and the coalgebra structures are induced by the group structure and the diagonal map respectively.
Dually, we denote by $k^G$ the function Hopf algebra.
It is obvious that $G$ is an abelian group if and only if the induced group Hopf algebra and the function Hopf algebra are bicommutative.

The previous constructions are functorial.
They give covariant and contravariant functors from the category of finite groups to the category of finite-dimensional Hopf algebras respectively :
\begin{align}
\label{202005011044}
& k(-) : \mathsf{Grp}^\mathsf{fin} \to \mathsf{Hopf}^\mathsf{cocom,fin}_k \\
\label{202005011045}
& k^{(-)} : \left( \mathsf{Grp}^\mathsf{fin} \right)^\mathsf{op} \to \mathsf{Hopf}^\mathsf{com,fin}_k
\end{align}

\begin{prop}
\label{202005031107}
The functors (\ref{202005011044}) and (\ref{202005011045}) are fully faithful.
\end{prop}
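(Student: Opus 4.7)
The plan is to treat both functors in parallel, reducing the content of full faithfulness to two standard facts about finite-dimensional Hopf algebras over a field: the group-like elements of a group Hopf algebra $kG$ are exactly the elements of $G$, and finite-dimensional linear duality extends to a contravariant equivalence $\mathsf{Hopf}^\mathsf{com,fin}_k \simeq (\mathsf{Hopf}^\mathsf{cocom,fin}_k)^\mathsf{op}$ under which $(kG)^* \cong k^G$.

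First I would dispose of faithfulness. For $k(-)$, two group homomorphisms $f,g : G \to H$ that induce the same $k$-linear map $kG \to kH$ must agree on the basis $G \subset kG$, hence be equal. Faithfulness of $k^{(-)}$ follows either by the same argument using the idempotent basis of $k^G$ dual to the group basis, or it can be deferred to the duality step in the next paragraph.

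For fullness of $k(-)$, let $\phi : kG \to kH$ be a morphism of Hopf algebras. The key observation is that $\phi$ preserves group-like elements, and for a group Hopf algebra over a field the set of group-like elements is precisely the underlying group; this is a classical theorem, provable by a linear independence argument for group-likes. Hence $\phi$ restricts to a set map $f : G \to H$. Compatibility of $\phi$ with the algebra structure forces $f$ to be a group homomorphism, and since $G$ is a $k$-basis of $kG$ and $\phi$ is $k$-linear, $\phi = k f$. Thus $k(-)$ is full.

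For fullness of $k^{(-)}$, my preferred route is to exploit finite-dimensional linear duality rather than redo the argument. Duality induces a contravariant functor $(-)^* : \mathsf{Hopf}^\mathsf{com,fin}_k \to \mathsf{Hopf}^\mathsf{cocom,fin}_k$ which is an equivalence of categories, and there is a natural isomorphism $(kG)^* \cong k^G$ of Hopf algebras sending the dual basis to the Dirac characters. Hence $k^{(-)}$ factors, up to natural isomorphism, as the composite of $k(-)$ with this equivalence, and full faithfulness transfers. The main obstacle is really the first of the two cited facts (group-likes of $kG$ are $G$), which is a standard but non-trivial input; if one wishes to avoid invoking it, one can instead argue directly in the $k^{(-)}$ case using that algebra homomorphisms $k^G \to k$ are in bijection with $G$ (because $k^G \cong k^{\times |G|}$ as an algebra has exactly $|G|$ such characters), and then transport back through duality.
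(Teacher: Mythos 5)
Your argument is correct and follows essentially the same route as the paper: both reduce $k^{(-)}$ to $k(-)$ via finite-dimensional duality and prove fullness of $k(-)$ by showing that a Hopf morphism $\xi : kG \to kH$ carries the group-like elements $G$ into $H$ and therefore arises from a group homomorphism. The only cosmetic difference is that you cite the classical fact that the group-likes of $kH$ are exactly $H$ (via linear independence of group-likes), whereas the paper re-derives this in situ by the explicit computation with the coefficients $\lambda_{g,h}$ of $\xi(g)$ in the basis $H$.
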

\begin{proof}
It suffices to show the functor (\ref{202005011044}) is fully faithful.
Since the functor $k(-)$ is faithful by definitions, we prove that it is full.
Let $G,H$ be finite groups.
Let $\xi : kG \to k H$ be a morphism in $\mathsf{Hopf}^\mathsf{cocom,fin}_k$, i.e. a Hopf homomorphism.
For an element $g \in G \subset  kG$ and $h \in H \subset kH$, we define $\lambda_{g,h}$ by
\begin{align}
\xi ( g) = \sum_{h\in H} \lambda_{g,h} \cdot h .
\end{align}
Since the homomorphism $\xi$ preserves comultiplications, for any $g \in G$ we obtain,
\begin{align}
\sum_{h\in H} \lambda_{g,h} \cdot h \otimes h
=
\sum_{h_1,h_2 \in H} \lambda_{g,h_1} \cdot \lambda_{g,h_2} \cdot (h_1 \otimes h_2 ) .
\end{align}
It implies $\lambda^2_{g,h} = \lambda_{g,h}$ and $\lambda_{g,h_1} \cdot \lambda_{g,h_2} = 0$ if $h_1 \neq h_2$.
Since the homomorphism $\xi$ preserves counits, for any $g \in G$ we have $1 = \sum_{h \in H} \lambda_{g,h}$.
Hence, for each $g \in G$ there exists $h \in H$ such that $\lambda_{g,h} \neq 0$.
For such $h \in H$, the above condition $\lambda^2_{g,h} = \lambda_{g,h}$ implies $\lambda_{g,h} = 1$.
Moreover, $\lambda_{g,h_1} = \lambda_{g,h} \cdot \lambda_{g,h_1} = 0$ if $h \neq h_1$ implies that for each $g \in G$ there exists a unique $h\in H$ such that $\lambda_{g,h} = 1$ and $\lambda_{g,h^\prime} = 0$ if $h \neq h^\prime$.
Denote such $h$ by $\varrho ( g)$.
Then by definition, we have $\xi (g) = \varrho ( g) \in H \subset k H$.
Since $\xi$ preserves algebra structures, one can verify that $\varrho : G \to H$ preserves groups structures.
Above all, $\varrho : G \to H$ is a morphism in $\mathsf{Grp}^\mathsf{fin}$ such that $k \varrho  = \xi$.
It completes the proof.
\end{proof}

\begin{prop}
\label{202002150932}
Every finite-dimensional function Hopf algebra is commutative and semisimple.
\end{prop}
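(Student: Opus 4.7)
The plan is to unpack the definition of the function Hopf algebra $k^G$ and identify its underlying algebra structure explicitly. As a vector space, $k^G$ is the space of functions $G \to k$, and its algebra structure is the pointwise one: $(f_1 \cdot f_2)(g) = f_1(g) f_2(g)$, with unit the constant function $1$. Since $k$ is a commutative field and the multiplication is defined coordinate-wise, commutativity is immediate.

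For semisimplicity, I would produce an explicit isomorphism of $k$-algebras
\begin{align}
k^G \;\xrightarrow{\;\sim\;}\; \prod_{g \in G} k, \qquad f \mapsto (f(g))_{g \in G}.
\end{align}
This is visibly a $k$-linear bijection (the inverse sends a tuple $(\lambda_g)_{g \in G}$ to the function $g \mapsto \lambda_g$), and it is multiplicative and unital because both sides use coordinate-wise multiplication and the constant function $1$ corresponds to the all-ones tuple. Hence $k^G$ is isomorphic as a $k$-algebra to a finite product of copies of the field $k$.

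Finally, I would invoke the standard fact that a finite product of fields is a semisimple algebra: each factor $k$ is a simple module over itself, so $\prod_{g \in G} k$ decomposes as a direct sum of simple modules $k \cdot e_g$, where $e_g$ is the indicator function $\delta_g$. Equivalently, the $e_g$ form a complete system of orthogonal idempotents summing to $1$, exhibiting $k^G$ as a direct sum of simple two-sided ideals. This gives both commutativity and semisimplicity.

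There is essentially no obstacle here: the entire content is that $k^G$ has pointwise algebra structure, and everything follows from the structure theorem (or direct verification) for finite products of fields. The only point worth being slightly careful about is to emphasize that the claim is about the \emph{algebra} structure of $k^G$, not the coalgebra structure, so the argument does not require any non-trivial Hopf-algebraic input.
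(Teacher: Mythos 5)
Your proof is correct, but it takes a genuinely different route from the paper's. The paper disposes of commutativity by definition and then deduces semisimplicity from the Larson--Sweedler theorem: $k^G$ admits a normalized integral, and a finite-dimensional Hopf algebra with a normalized integral is semisimple. You instead bypass all Hopf-algebraic machinery by identifying the underlying algebra of $k^G$ explicitly as the finite product $\prod_{g\in G} k$ with coordinatewise multiplication (the indicator functions $\delta_g$ giving a complete system of orthogonal idempotents), and then invoke the elementary fact that a finite product of fields is semisimple. Your identification of the product on $k^G$ as pointwise is correct --- it is the dual of the diagonal comultiplication $g \mapsto g \otimes g$ on $kG$ --- so the argument goes through. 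What each approach buys: yours is completely explicit and self-contained, requiring no input beyond linear algebra, and it makes the Wedderburn decomposition of $k^G$ visible (which is in fact the decomposition the paper later exploits in Proposition \ref{202002150938}); the paper's argument is shorter on the page and illustrates the integral criterion that it reuses repeatedly elsewhere (e.g.\ in Proposition \ref{202005081415} and the appendix), at the cost of citing a nontrivial theorem where none is needed.
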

\begin{proof}
A finite-dimensional function Hopf algebra $k^G$ is commutative by definitions.
Note that the Hopf algebra $k^G$ has a normalized integral.
By \cite{LarSwe}, it is semisimple.
\end{proof}

Equivalently, every finite-dimensional group Hopf algebra is cocommutative and cosemisimple.
The functors (\ref{202005011044}), (\ref{202005011045}) factor as follows.

\begin{align}
& k(-) : \mathsf{Grp}^\mathsf{fin} \to \mathsf{Hopf}^\mathsf{cocom,fin,semisim}_k \\
& k^{(-)} : \left( \mathsf{Grp}^\mathsf{fin} \right)^\mathsf{op} \to \mathsf{Hopf}^\mathsf{com,fin,cosemisim}_k
\end{align}

The converse of Proposition \ref{202002150932} is true if we require some conditions on the ground field $k$.
We give a sufficient condition for the ground field $k$ and a finite-dimensional commutative Hopf algebra over $k$ to be isomorphic to a function Hopf algebra \cite{MF}.

\begin{prop}
\label{202002150938}
Suppose that the field $k$ is algebraically closed.
Then every finite-dimensional semisimple commutative Hopf algebra over $k$ is isomorphic to a function Hopf algebra.
In particular, we obtain a category equivalence $\left( \mathsf{Grp}^\mathsf{fin} \right)^\mathsf{op} \simeq \mathsf{Hopf}^\mathsf{com,fin,semisim}_k$.

The analogous statement for group Hopf algebras holds : $ \mathsf{Grp}^\mathsf{fin} \simeq \mathsf{Hopf}^\mathsf{cocom,fin,cosemisim}_k$.
\end{prop}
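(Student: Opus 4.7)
The plan is to prove essential surjectivity (Proposition \ref{202005031107} already provides full faithfulness) by reconstructing the group $G$ from $H$ as its set of characters. Let $H$ be a finite-dimensional semisimple commutative Hopf algebra over an algebraically closed $k$. First I would invoke the Artin--Wedderburn theorem (or rather its commutative specialization): a finite-dimensional semisimple commutative $k$-algebra over an algebraically closed field is a product of copies of $k$, so $H \cong k^n$ as an algebra. Consequently the set $G := \mathrm{Alg}_k(H,k)$ of algebra characters $\chi : H \to k$ is finite, with $|G| = n = \dim_k H$, and the evaluation pairing $H \times G \to k$ separates points of $H$.

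Next I would put a group structure on $G$ coming from the Hopf structure of $H$. Define a binary operation by convolution, $(\chi_1 \cdot \chi_2)(h) := (\chi_1 \otimes \chi_2)\Delta(h)$, with unit the counit $\epsilon$ and inverse $\chi^{-1} := \chi \circ S$. Coassociativity, the counit axiom, and the antipode axiom translate directly into associativity, the unit law, and the inverse law for $G$; for example $\chi \cdot (\chi \circ S) = \chi \circ \mu \circ (\mathrm{id} \otimes S) \circ \Delta = \chi \circ \eta \circ \epsilon = \epsilon$. One should also check that each of these operations lands in $G$ (i.e.\ produces an algebra map), which uses that $H$ is commutative so that $\chi_1 \otimes \chi_2$ is multiplicative on $H \otimes H$, and that $S$ is an algebra map for commutative $H$.

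Then I would define $\Phi : H \to k^G$ by $\Phi(h)(\chi) := \chi(h)$ and verify it is a Hopf homomorphism. Compatibility of $\Phi$ with multiplication, unit, comultiplication, counit, and antipode on the two sides is designed into the definition of the group structure: for instance, $\Phi(h_1 h_2)(\chi) = \chi(h_1)\chi(h_2) = (\Phi(h_1)\Phi(h_2))(\chi)$, and $\Phi(h)(\chi_1\chi_2) = (\chi_1\otimes\chi_2)\Delta(h)$ matches the comultiplication on $k^G$ under the identification $k^{G\times G} \cong k^G \otimes k^G$ (this uses finiteness of $G$). Injectivity of $\Phi$ follows because characters separate points of $k^n$; surjectivity then follows by the dimension count $\dim_k H = |G| = \dim_k k^G$. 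This produces the desired isomorphism $H \cong k^G$ in $\mathsf{Hopf}^\mathsf{com,fin,semisim}_k$, and combined with Proposition \ref{202005031107} yields the category equivalence $(\mathsf{Grp}^\mathsf{fin})^\mathsf{op} \simeq \mathsf{Hopf}^\mathsf{com,fin,semisim}_k$.

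Finally, for the dual statement $\mathsf{Grp}^\mathsf{fin} \simeq \mathsf{Hopf}^\mathsf{cocom,fin,cosemisim}_k$, I would dualize: for $H$ finite-dimensional cocommutative and cosemisimple, the linear dual $H^*$ is finite-dimensional commutative and semisimple, so by the first part $H^* \cong k^G$ for a unique finite group $G$, whence $H \cong (k^G)^* \cong kG$ as Hopf algebras; naturality of the linear dual promotes this to the claimed equivalence of categories. The main subtlety in the whole argument is making sure the convolution on $G$ really is well-defined and gives a group (so that $k^G$ is a Hopf algebra object assembled exactly from the data extracted from $H$); once that is set up, everything else reduces to the standard duality between finite commutative algebras over an algebraically closed field and finite sets.
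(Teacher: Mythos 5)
Your proposal is correct and follows essentially the same route as the paper: apply Wedderburn to identify $H\cong k^{n}$ as an algebra, observe that the algebra characters (equivalently, the group-like elements of $H^{\vee}$) number exactly $\dim_k H$ and form a group under convolution, and conclude $H\cong k^{G}$ by a dimension count, with the cocommutative case handled by dualization. You simply spell out more explicitly the group axioms on the character set and the verification that the evaluation map is a Hopf homomorphism, which the paper leaves implicit.
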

\begin{proof}
Let $k$ be an algebraically closed field.
Let $A$ be a semisimple commutative algebra over $k$.
We apply the Wedderburn's theorem (here, we use the assumption of $k$ and the semisimplicity of $A$) to $A$ to obtain an algebra isomorphism $A \cong \bigoplus_{i} M_{n_i} ( k )$.
Since $A$ is a commutative algebra, the natural numbers $n_i$ should be $1$.
Hence, we obtain an algebra isomorphism $A \cong \bigoplus_{i} k$.

We compute group-like elements in the dual Hopf algebra $A^\vee$.
By definitions, group-like elements of $A$ are given by algebra homomorphisms from $A$ to the unit algebra $k$.
Under the above isomorphism $A \cong \bigoplus_{i} k$, such algebra homomorphisms from $A$ to $k$ corresponds to the projections $\bigoplus_{i} k \to k$ one-to-one.
Hence, the order of the group-like elements of $A^\vee$ is the order of such projections, i.e. $\dim A$.
Above all, the Hopf algebra $A$ coincides with the Hopf algebra generated by its group-like elements.
It completes the proof.
\end{proof}

\begin{Corollary}
\label{202005061120}
If the field $k$ is algebraically closed, then every finite-dimensional semisimple commutative Hopf algebra over $k$ is isomorphic to a function Hopf algebra.
In particular, we obtain a category equivalence $\left( \mathsf{Ab}^\mathsf{fin} \right)^\mathsf{op} \simeq \mathsf{Hopf}^\mathsf{bicom,fin,semisim}_k$.

The analogous statement for group Hopf algebras holds : $ \mathsf{Ab}^\mathsf{fin} \simeq \mathsf{Hopf}^\mathsf{bicom,fin,cosemisim}_k$.
\end{Corollary}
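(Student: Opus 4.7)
The plan is to deduce the corollary directly from Proposition \ref{202002150938} by intersecting both sides of the equivalence with the appropriate (co)commutativity condition. Proposition \ref{202002150938} already supplies an equivalence $(\mathsf{Grp}^\mathsf{fin})^\mathsf{op} \simeq \mathsf{Hopf}^\mathsf{com,fin,semisim}_k$ over an algebraically closed field $k$, with quasi-inverse the function Hopf algebra functor $k^{(-)}$. So all that remains is to identify which subcategory on the group side corresponds to the bicommutative Hopf algebras on the other side.

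First I would check that for a finite group $G$, the function Hopf algebra $k^G$ is cocommutative if and only if $G$ is abelian. This is immediate from the definition, since the comultiplication $\Delta : k^G \to k^G \otimes k^G \cong k^{G \times G}$ is dual to the group law $G \times G \to G$, and cocommutativity of $\Delta$ translates into commutativity of multiplication. Combined with Proposition \ref{202002150938}, this shows that the equivalence restricts to $(\mathsf{Ab}^\mathsf{fin})^\mathsf{op} \simeq \mathsf{Hopf}^\mathsf{bicom,fin,semisim}_k$: given a finite-dimensional bicommutative semisimple Hopf algebra $A$ over $k$, Proposition \ref{202002150938} produces a finite group $G$ with $A \cong k^G$, and the additional cocommutativity of $A$ forces $G$ to be abelian, yielding the first claim of the corollary.

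For the second claim, the argument is entirely dual: the analogous part of Proposition \ref{202002150938} provides the equivalence $\mathsf{Grp}^\mathsf{fin} \simeq \mathsf{Hopf}^\mathsf{cocom,fin,cosemisim}_k$ via $G \mapsto kG$, and $kG$ is commutative exactly when $G$ is abelian. Restricting to abelian groups on both sides yields $\mathsf{Ab}^\mathsf{fin} \simeq \mathsf{Hopf}^\mathsf{bicom,fin,cosemisim}_k$. The only point worth a second look—and the mildest potential obstacle—is to confirm that (co)semisimplicity is unaffected by the restriction, but this is automatic because semisimplicity is a property of the algebra structure alone (and dually for cosemisimplicity), which is unchanged by imposing the extra (co)commutativity condition on the Hopf structure. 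Since there is no serious content beyond this bookkeeping, the proof should be just a few lines invoking Proposition \ref{202002150938} together with the two elementary observations about when $k^G$ is cocommutative and when $kG$ is commutative.
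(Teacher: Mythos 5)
Your proposal is correct and matches the paper's intended argument: the paper gives no explicit proof for this corollary, treating it as immediate from Proposition \ref{202002150938} together with the observation (already recorded in subsection \ref{202005071351}) that $G$ is abelian if and only if $kG$ and $k^G$ are bicommutative, which is precisely the restriction argument you carry out.
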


\subsection{Nontrivial bisemisimple bicommutative Hopf algebras}
\label{202005031017}

\begin{Defn}
\label{202005081617}
\rm
A finite-dimensional Hopf algebra is {\it bisemisimple} if the underlying algebra is semisimple and the dual algebra of its underlying coalgebra is semisimple.
\end{Defn}

\begin{Defn}
\rm
A Hopf algebra is {\it trivial} if it is a group Hopf algebra or a function Hopf algebra.
\end{Defn}

The semisimplicity or the cosemisimplicity is a necessary condition for a finite-dimensional bicommutative Hopf algebra to be trivial.
Moreover, if the ground field is algebraically closed, then it is a sufficient condition by Corollary \ref{202005061120}.

Denote by $GE(A)$ the group consisting of group-like elements in the Hopf algebra $A$ :
\begin{align}
GE (A ) \stackrel{\mathrm{def.}}{=} \{ 0 \neq v \in A ~;~ \Delta (v) = v \otimes v \} .
\end{align}
Then we have a natural isomorphism $GE(A \otimes B) \cong GE(A) \times GE(B)$.
The group Hopf algebra $k GE(A)$ is naturally a sub Hopf algebra of $A$.
It is obvious that the Hopf algebra $A$ is a group Hopf algebra if and only if the sub Hopf algebra $k GE(A)$ coincides with $A$.
Especially, a finite-dimensional Hopf algebra $A$ is trivial if and only if $|GE(A)| < \dim_k A$ and $|GE (A^\vee)| < \dim_k A$.

\begin{Defn}
\rm
Let $A$ be a finite-dimensional bicommutative Hopf algebra.
For the dual Hopf algebra $A^\vee$, we define $\mathcal{D}(A) \stackrel{\mathrm{def.}}{=} A \otimes A^\vee$ by the tensor product of Hopf algebras.
\end{Defn}

\begin{prop}
\label{202005081415}
\begin{enumerate}
\item
If the Hopf algebra $A$ is not a group Hopf algebra, then the Hopf algebra $\mathcal{D}(A)$ is not trivial.
\item
If the Hopf algebra $A$ is bisemisimple, then the Hopf algebra $\mathcal{D}(A)$ is bisemisimple.
\end{enumerate}
\end{prop}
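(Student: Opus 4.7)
The plan is to verify each clause by reducing it to a criterion already established in the paper: the group-like counting criterion for (1), and the Larson--Sweedler integral criterion (as invoked in Proposition \ref{202002150932}) for (2).

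For (1), set $B = \mathcal{D}(A) = A \otimes A^\vee$, so that $\dim_k B = (\dim_k A)^2$. I apply the characterization stated just before the proposition: $B$ is nontrivial iff both $|GE(B)| < \dim_k B$ and $|GE(B^\vee)| < \dim_k B$. The natural identification $GE(A \otimes A^\vee) \cong GE(A) \times GE(A^\vee)$ recorded in the excerpt gives $|GE(B)| = |GE(A)| \cdot |GE(A^\vee)|$. The hypothesis that $A$ is not a group Hopf algebra yields the strict inequality $|GE(A)| < \dim_k A$, while linear independence of group-like elements forces $|GE(A^\vee)| \le \dim_k A^\vee = \dim_k A$. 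Multiplying produces $|GE(B)| < (\dim_k A)^2 = \dim_k B$. Since $B^\vee \cong A^\vee \otimes A$, the same calculation with $A$ and $A^\vee$ swapped yields $|GE(B^\vee)| < \dim_k B$, so $B$ is nontrivial.

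For (2), I use the criterion that a finite-dimensional Hopf algebra is semisimple iff it admits a two-sided integral whose counit is nonzero. Bisemisimplicity of $A$ supplies such integrals $\Lambda_A \in A$ and $\Lambda_{A^\vee} \in A^\vee$. The tensor $\Lambda_A \otimes \Lambda_{A^\vee}$ is then a two-sided integral of $\mathcal{D}(A)$, and its counit equals $\varepsilon_A(\Lambda_A)\cdot \varepsilon_{A^\vee}(\Lambda_{A^\vee}) \ne 0$, so $\mathcal{D}(A)$ is semisimple. Since $\mathcal{D}(A)^\vee \cong A^\vee \otimes A$, the identical argument with the roles of $A$ and $A^\vee$ exchanged shows that the dual algebra of $\mathcal{D}(A)$ is semisimple as well, completing bisemisimplicity.

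Neither part presents a genuine obstacle once the correct criterion is in hand; the only point requiring attention is the symmetric verification on the dual side in each clause, which is automatic from $\mathcal{D}(A)^\vee \cong A^\vee \otimes A$.
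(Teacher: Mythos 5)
Your proposal is correct and follows essentially the same route as the paper: part (1) via the group-like count $|GE(A\otimes A^\vee)| = |GE(A)|\cdot|GE(A^\vee)|$ together with $\mathcal{D}(A)^\vee \cong A^\vee\otimes A$, and part (2) via the Larson--Sweedler integral criterion with the tensor product of a normalized integral and cointegral. If anything, your version is slightly more careful than the paper's, since you make explicit the bound $|GE(A^\vee)| \le \dim_k A^\vee$ needed to keep the product inequality strict.
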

\begin{proof}
We prove the first part.
If $A$ is not a group Hopf algebra, then $|GE(A)| < \dim_k A$.
We obtain $|GE( \mathcal{D} (A))| = |GE(A)| \cdot |GE(A^\vee)| = < \dim_k A \cdot \dim_k A^\vee = \dim_k \mathcal{D}(A)$.
Since we have an isomorphism of Hopf algebras $\mathcal{D}(A)^\vee \cong \mathcal{D}(A)$, we obtain $|GE(\mathcal{D}(A)^\vee)| < \dim_k \mathcal{D}(A)$.
Hence, $\mathcal{D}(A)$ is neither a group Hopf algebra nor a function Hopf algebra.

We prove the second part.
Suppose that $A$ is bisemisimple.
Recall that a finite-dimensional Hopf algebra is (co)semisimple if and only if it has a normalized (co)integral \cite{LarSwe}.
Then there exists a normalized integral and a normalized cointegral of $A$.
A tensor product of them induces a normalized integral a normalized integral and a normalized cointegral of $\mathcal{D}(A) = A \otimes A^\vee$.
Hence, $\mathcal{D}(A)$ is bisemisimple.
\end{proof}

The following corollaries are immediate from Proposition \ref{202005081415}.

\begin{Corollary}
Let $k$ be a field.
If there exists a finite-dimensional bicommutative (bisemisimple) Hopf algebra over $k$ which is not a group Hopf algebra, then there exists a nontrivial finite-dimensional bicommutative (bisemisimple) Hopf algebra.
\end{Corollary}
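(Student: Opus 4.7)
The plan is to apply Proposition \ref{202005081415} directly to the hypothesized Hopf algebra. Let $A$ be a finite-dimensional bicommutative Hopf algebra over $k$ which is not a group Hopf algebra (and, in the parenthetical strengthening, is in addition bisemisimple). I would simply propose $\mathcal{D}(A) = A \otimes A^\vee$ as the witness for the nontrivial (bisemisimple) bicommutative Hopf algebra whose existence is asserted.

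First I would check that $\mathcal{D}(A)$ lies in the correct category. Since $A$ is finite-dimensional, so is $A^\vee$, and thus so is their tensor product. Bicommutativity of $A$ is equivalent to bicommutativity of $A^\vee$ (duality swaps (co)commutativity with (co)commutativity, and under bicommutativity both are preserved), and the tensor product of two bicommutative Hopf algebras is again bicommutative. Hence $\mathcal{D}(A)$ is a finite-dimensional bicommutative Hopf algebra over $k$.

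Next, part (1) of Proposition \ref{202005081415} gives directly that $\mathcal{D}(A)$ is not trivial, since by hypothesis $A$ is not a group Hopf algebra. In the bisemisimple case, part (2) of the same proposition ensures that $\mathcal{D}(A)$ is bisemisimple whenever $A$ is. This already yields the desired nontrivial finite-dimensional bicommutative (bisemisimple) Hopf algebra.

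There is no real obstacle here: both ingredients, the nontriviality of $\mathcal{D}(A)$ and the preservation of bisemisimplicity under the doubling construction, are precisely what Proposition \ref{202005081415} was designed to deliver. The only sanity check worth stating explicitly in the write-up is that one genuinely needs the \emph{bicommutative} hypothesis on $A$ (so that $A^\vee$ is again bicommutative, and so that $\mathcal{D}(A)^\vee \cong \mathcal{D}(A)$, which is implicitly used in the proof of part (1)); everything else is a one-line invocation of the proposition.
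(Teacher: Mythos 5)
Your proposal is correct and follows exactly the route the paper intends: the corollary is stated there as an immediate consequence of Proposition \ref{202005081415}, with $\mathcal{D}(A) = A \otimes A^\vee$ as the witness, which is precisely your argument. The extra sanity checks you include (finite-dimensionality and bicommutativity of $\mathcal{D}(A)$) are a reasonable, harmless elaboration of what the paper leaves implicit.
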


\begin{Corollary}
\label{202005081432}
Let $k$ be a field.
Suppose that for an integer $n$, the number of $n$-th roots of the unit $1 \in k$ is lower than $n$.
Then the Hopf algebra $\mathcal{D} (k \mathbb{Z} / n)$ is not trivial.
Furthermore, if the integer $n$ is coprime to the characteristic of $k$, then it is bisemisimple.
\end{Corollary}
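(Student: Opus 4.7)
The plan is to specialize Proposition \ref{202005081415} to the case $A = k\mathbb{Z}/n$, but with care: although $A$ itself \emph{is} a group Hopf algebra (so part (1) of Proposition \ref{202005081415} does not apply verbatim), the same counting argument that proves that proposition can be rerun, once we pin down the group-like elements of $A$ and of $A^\vee$.

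First I would compute both $|GE(A)|$ and $|GE(A^\vee)|$ for $A = k\mathbb{Z}/n$. Since $A$ is the group Hopf algebra on $\mathbb{Z}/n$, the group-like elements are exactly the images of the group elements, so $|GE(A)| = n$. Dually, group-like elements of $A^\vee$ are the same as $k$-algebra maps $A \to k$; such a map is determined by where it sends a generator $g$ of $\mathbb{Z}/n$, and must send it to an $n$-th root of $1 \in k$. Thus $|GE(A^\vee)|$ equals the number of $n$-th roots of unity in $k$, which by hypothesis is strictly less than $n$. Using the multiplicativity $GE(X \otimes Y) \cong GE(X) \times GE(Y)$ noted in the paper, I obtain
\begin{align*}
|GE(\mathcal{D}(A))| = |GE(A)| \cdot |GE(A^\vee)| < n \cdot n = \dim_k \mathcal{D}(A).
\end{align*}
Since $\mathcal{D}(A)^\vee = (A \otimes A^\vee)^\vee \cong A^\vee \otimes A \cong \mathcal{D}(A)$ as Hopf algebras, the same bound holds for $\mathcal{D}(A)^\vee$. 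By the criterion stated just before Proposition \ref{202005081415} (namely, a finite-dimensional Hopf algebra is nontrivial iff both $|GE(-)|$ and $|GE((-)^\vee)|$ are strictly smaller than the dimension), this proves the first assertion.

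For the second assertion, assume $n$ is coprime to the characteristic of $k$. Then $A = k\mathbb{Z}/n$ is semisimple by Maschke's theorem; its dual is the function Hopf algebra $k^{\mathbb{Z}/n}$, which is semisimple by Proposition \ref{202002150932}, so $A$ is also cosemisimple. Hence $A$ is bisemisimple, and part (2) of Proposition \ref{202005081415} directly yields that $\mathcal{D}(A)$ is bisemisimple.

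The main subtle point — and essentially the only obstacle — is that one cannot quote part (1) of Proposition \ref{202005081415} as a black box, because $k\mathbb{Z}/n$ is itself a group Hopf algebra. The fix is simply to note that the proof of that proposition proceeded via strict inequalities on the cardinalities $|GE(-)|$, and that the same inequalities continue to hold here because the nontriviality is now supplied by the dual side, i.e. by the hypothesis on $n$-th roots of unity rather than by the failure of $A$ to be a group Hopf algebra.
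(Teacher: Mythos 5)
Your proof is correct, and it follows the same underlying mechanism as the paper: the paper gives no separate argument for this corollary beyond declaring it ``immediate from Proposition \ref{202005081415}'', and the substance in either case is the count of group-like elements. You are right to flag that part (1) of that proposition cannot be quoted verbatim, since $k\mathbb{Z}/n$ \emph{is} a group Hopf algebra; your computation $|GE(k\mathbb{Z}/n)| = n$ and $|GE((k\mathbb{Z}/n)^\vee)| = \#\{n\text{-th roots of }1\} < n$, fed into the multiplicativity of $GE$ under $\otimes$ and the self-duality $\mathcal{D}(A)^\vee \cong \mathcal{D}(A)$, is exactly the proposition's proof with the roles of $A$ and $A^\vee$ exchanged. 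A marginally shorter route to the same end is to apply Proposition \ref{202005081415}(1) as a black box to $A = k^{\mathbb{Z}/n}$ (which is \emph{not} a group Hopf algebra, precisely by the hypothesis on roots of unity) and then observe $\mathcal{D}(k^{\mathbb{Z}/n}) = k^{\mathbb{Z}/n} \otimes k\mathbb{Z}/n \cong \mathcal{D}(k\mathbb{Z}/n)$; this is presumably what the author intends by ``immediate''. Your treatment of the second assertion (Maschke for semisimplicity, Proposition \ref{202002150932} for cosemisimplicity, then Proposition \ref{202005081415}(2)) is also correct. One small remark: the sentence in the paper preceding Proposition \ref{202005081415} that you invoke as the nontriviality criterion contains an evident typo (it should read ``not trivial if and only if''); you have used the corrected form, which is the right one.
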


\begin{Example}
The real field $\mathbb{R}$, the rational field $\mathbb{Q}$ and any finite field could be considered as the field $k$ having the integer $n$ in Corollary \ref{202005081432}.
\end{Example}

\begin{remark}
The dimension of nontrivial Hopf algebras obtained by Proposition \ref{202005081415} is a composite number since $\dim_k \mathcal{D} (A) = (\dim_k A)^2$.
It depends on the field $k$ whether there exists a finite-dimensional bicommutative (bisemisimple) nontrivial Hopf algebra whose dimension is prime.
For the case $k = \mathbb{R}$, we have a negative answer.
Let $A$ be a finite-dimensional bicommutative Hopf algebra over $\mathbb{R}$ with $\dim_\mathbb{R} A$ prime.
If $A$ is semisimple, then the Hopf algebra $A$ is a group Hopf algebra or a function Hopf algebra, i.e. trivial.
On the other hand, for $k = \mathbb{F}_3$, we have a positive answer.
A concrete example is given in the appendix.
\end{remark}

\section{$\mathsf{Hopf}^\mathsf{bicom}_k$-valued homology theory}
\label{202005071350}

\begin{Defn}
\label{201912312136}
\rm
Let $\mathcal{A}$ be an abelian category.
An {\it $\mathcal{A}$-valued homology theory} $E_\bullet = \{ E_q , \partial_q \}_{q\in \mathbb{Z}}$ is given by the following data :
\begin{enumerate}
\item
For each integer $q$, the data $E_q$ consist of two assignments :
The first one assigns an object $E_q ( K , K^\prime)$ of $\mathcal{A}$ to a pair of finite CW-spaces $(K, K^\prime)$.
The second one assigns a morphism $E_q (f) : E_q (K, K^\prime ) \to E_q(L , L^\prime)$ to a map $f: (K, K^\prime) \to ( L, L^\prime)$ where the objects $E_q (K, K^\prime ), E_q(L , L^\prime)$ are the corresponding objects by the first assignment.
\item
For each pair of finite CW-spaces $(K, K^\prime)$, the data $\partial_q$ is a natural transformation $\partial_q : E_{q+1} ( K , K^\prime) \to E_q ( K^\prime )$ in $\mathcal{A}$ where $E_q ( K^\prime)$ is a shorthand for $E_q ( K ^\prime , \emptyset )$.
\end{enumerate}
which are subject to following conditions :
\begin{enumerate}
\item
The assignments satisfies a functoriality :
\begin{align}
E_q ( Id_{K,K^\prime} ) &= Id_{E_q(K , K^\prime)} , \notag \\
E_q ( g \circ f) &= E_q (g) \circ E_q (f) . \notag
\end{align}
Here, $Id_{K,K^\prime}$ is the identity on $(K,K^\prime)$, and the maps $g,f$ are composable.
\item
Let $f,g$ be maps from $(K,K^\prime)$ to $(L,L^\prime)$.
A homotopy $f \simeq g$ induces
\begin{align}
E_q ( f ) = E_q ( g) . \notag
\end{align}
\item
A triple of finite CW-spaces $(X, K , L)$ induces an isomorphism $E_q ( K , K \cap L ) \to E_q ( K \cup L , L )$.
\item
A pair $(K,K^\prime)$ induces a long exact sequence in $\mathcal{A}$ where $i,j$ are inclusions :
\begin{align}
\cdots \to E_q ( K^\prime ) \stackrel{E_q (i)}{\to} E_q ( K ) \stackrel{E_q (j)}{\to} E_q ( K , K^\prime ) \stackrel{\partial_{q-1}}{\to} E_{q-1} ( K^\prime) \to \cdots . \notag
\end{align}
\end{enumerate}
The corresponding object $E_q ( \mathrm{pt} )$ to the pointed 0-sphere is called {\it the $q$-th coefficient} of homology theory.
An $\mathcal{A}$-valued homology theory is {\it ordinary} if any $q$-th coefficient is isomorphic to the zero object of $\mathcal{A}$ for $q \neq 0$.
An $\mathcal{A}$-valued homology theory is {\it extraordinary} if it is not ordinary.
\end{Defn}

\begin{Example}
Consider $\mathcal{A} = \mathsf{Ab}$ the category of abelian groups.
Then an $\mathsf{Ab}$-valued homology theory is nothing but a generalized homology theory (defined on finite CW-spaces).
More generally, one can consider $\mathsf{Mod}_R$, the category of $R$-modules for a ring $R$.
\end{Example}

\subsection{Construction by the group Hopf algebra functor}
\label{202005031015}

Recall the group Hopf algebra functor,
\begin{align}
\label{202005030952}
k(-) : \mathsf{Ab} \to \mathsf{Hopf}^\mathsf{bicom}_k ~~; G \mapsto k G .
\end{align}
We give a way to construct a $\mathsf{Hopf}^\mathsf{bicom}_k$-valued homology theory starting from an $\mathsf{Ab}$-valued homology theory (equivalently, a generalized homology theory).

\begin{Defn}
\rm
Let $F_\bullet$ be an $\mathsf{Ab}$-valued homology theory.
Note that the group Hopf algebra functor (\ref{202005030952}) is an exact functor.
By composing the functor (\ref{202005030952}), the homology theory $F_\bullet$ induces a $\mathsf{Hopf}^\mathsf{bicom}_k$-valued homology theory denoted by $k F_\bullet$.

A $\mathsf{Hopf}^\mathsf{bicom}_k$-valued homology theory $E_\bullet$ {\it is induced by the group Hopf algebra functor} if there exists an $\mathsf{Ab}$-valued homology theory $F_\bullet$ such that $E_\bullet ~^\exists\cong k F_\bullet$.
\end{Defn}

\begin{prop}
\label{202005031005}
Suppose that the ground field $k$ is algebraically closed.
Then a $\mathsf{Hopf}^\mathsf{bicom}_k$-valued homology theory $E_\bullet$ satisfying the following conditions is induced by the group Hopf algebra functor : for any pair of finite CW-spaces $(K,K^\prime)$,
\begin{enumerate}
\item
the Hopf algebra $E_q ( K , K^\prime)$ is finite-dimensional.
\item
the Hopf algebra $E_q ( K , K^\prime)$ is cosemisimple, i.e. its dual Hopf algebra is semisimple.
\end{enumerate}
\end{prop}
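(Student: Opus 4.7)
The plan is to invert the equivalence $k(-) : \mathsf{Ab}^\mathsf{fin} \simeq \mathsf{Hopf}^\mathsf{bicom,fin,cosemisim}_k$ provided by Corollary \ref{202005061120} and apply it term-by-term. Let $\Gamma$ be a chosen quasi-inverse, concretely the group-like-elements functor $GE$. Under the hypotheses on $E_\bullet$, every object $E_q(K,K^\prime)$ lies in the subcategory $\mathsf{Hopf}^\mathsf{bicom,fin,cosemisim}_k$, so I would set $F_q(K,K^\prime) := \Gamma(E_q(K,K^\prime))$, with induced morphisms and with boundary $\partial_q^F := \Gamma(\partial_q^E)$. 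Functoriality, homotopy invariance, and the excision isomorphism transfer immediately from $E_\bullet$ to $F_\bullet$ because $\Gamma$ is a functor that sends isomorphisms to isomorphisms.

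The only nontrivial axiom to check is the long exact sequence, which amounts to showing that $\Gamma$ carries a long exact sequence in $\mathsf{Hopf}^\mathsf{bicom}_k$ whose terms all lie in $\mathsf{Hopf}^\mathsf{bicom,fin,cosemisim}_k$ to a long exact sequence of finite abelian groups. For this I would invoke that $k(-)$ is exact (the fact recorded at the start of subsection \ref{202005031015}) and fully faithful (Proposition \ref{202005031107}, strengthened by Corollary \ref{202005061120}). Via the equivalence, any candidate sequence $A \to B \to C$ in the subcategory comes from a unique sequence $G \to H \to K$ of finite abelian groups. Exactness of $k(-)$ gives $k(\ker g) = \ker(kg)$ and $k(\mathrm{Im}\, f) = \mathrm{Im}(kf)$ computed in $\mathsf{Hopf}^\mathsf{bicom}_k$, and full faithfulness then converts the equality $\ker(kg) = \mathrm{Im}(kf)$ into $\ker g = \mathrm{Im}\, f$ in $\mathsf{Ab}$, and conversely. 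Applied pointwise to the long exact sequence for $(K,K^\prime)$, this is exactly what is required.

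Finally, the natural isomorphism $k \circ \Gamma \cong \mathrm{Id}$ on $\mathsf{Hopf}^\mathsf{bicom,fin,cosemisim}_k$ yields a natural isomorphism $kF_\bullet \cong E_\bullet$ intertwining the boundary maps, so $E_\bullet$ is induced by the group Hopf algebra functor. The only nontrivial step is the equivalence-of-exactness argument sketched above; everything else is a routine transport of structure through an equivalence of abelian categories.
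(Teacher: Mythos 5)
Your proposal is correct and follows essentially the same route as the paper: both invert the equivalence $\mathsf{Ab}^\mathsf{fin} \simeq \mathsf{Hopf}^\mathsf{bicom,fin,cosemisim}_k$ via the group-like-elements functor (Proposition \ref{202002150938} together with Proposition \ref{202005031107}) and transport the axioms of a homology theory across it. The only difference is that you spell out the transfer of the long exact sequence axiom via exactness and faithfulness of $k(-)$, a step the paper's sketch leaves implicit.
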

\begin{proof}
We sketch the proof.
By Proposition \ref{202002150938}, there is a natural isomorphism $E_q ( K , K^\prime ) \cong k F_q ( K , K^\prime)$ where $F_q ( K , K^\prime)$ consists of group-like elements of the Hopf algebra $E_q ( K , K^\prime)$.
Moreover the Hopf homomorphism $E_q ( f)$ corresponding to a continuous map $f$ lift to group homomorphisms, denoted by $F_q (f)$, by Proposition \ref{202005031107}.
Analogously, so does the boundary homomorphism $\partial_q : E_{q+1} ( K , K^\prime) \to E_q (K^\prime)$.
The obtained data $F_\bullet = \{ F_q , \partial_q \}$ give an $\mathsf{Ab}$-valued homology theory by Proposition \ref{202005031107} again.
\end{proof}

Analogously, the function Hopf algebra functor $\left( \mathsf{Ab}^\mathsf{fin} \right)^\mathsf{op} \to \mathsf{Hopf}^\mathsf{bicom}_k ~; G \to k^G$ is also an exact functor so that an $\mathsf{Ab}^\mathsf{fin}$-valued cohomology theory (equivalently, a generalized cohomology theory whose coefficients are finite groups) induces a $\mathsf{Hopf}^\mathsf{bicom}_k$-valued homology theory.
We have a proposition analogous to Proposition \ref{202005031005}.

\subsection{Construction of ordinary homology theory}
\label{202005071101}

The ordinary homology theory with coefficients in an {\it abelian group} could be obtained from the chain complex related with the CW-complex structure.
Analogously an ordinary $\mathsf{Hopf}^\mathsf{bicom}_k$-valued homology theory is constructed.
More generally, we give a way to construct an ordinary $\mathcal{A}$-valued homology theory for any abelian category $\mathcal{A}$.

Let $A$ be an object of the category $\mathcal{A}$.
For a CW-space $K$, choose a CW-complex structure of $K$.
For a nonnegative integer $q$, we define an object $C^{cell}_q ( K ; A)$ of $\mathcal{A}$ by $C^{cell}_q ( K ; A) = A^{\oplus n_q}$, i.e. the $n_q$-times direct sum of $A$ where $n_q$ is the number of $q$-cells.
Denote by $[e_q : e_{q-1}] \in \mathbb{Z}$ the incidence number of a $q$-cell $e_q$ and a $(q-1)$-cell $e_{q-1}$ of $K$.
Consider a morphism $\partial_q : C^{cell}_q ( K ; A) \to C^{cell}_{q-1} ( K ; A)$ whose $(e_{q-1}, e_q)$-component of the matrix representation is $[e_q : e_{q-1}] \cdot Id_A$.
Then the data of $C^{cell}_\bullet ( K ; A) = \{ C^{cell}_q ( K ; A) , \partial_q \}_{q\in \mathbb{Z}}$ form a chain complex in $\mathcal{A}$ since $\partial_q \circ \partial_{q+1} = 0$ where we regard $C^{cell}_q ( K ; A) = 0$ if $q < 0$.

For a pair of CW-complexes $(K,K^\prime)$, a chain complex $C^{cell}_\bullet ( K, K^\prime ; A)$ is defined as the cokernel of the inclusion $C^{cell}_\bullet ( K^\prime ; A) \to C^{cell}_\bullet ( K ; A)$.
Let $H^{cell}_q ( K , K^\prime  ; A)$ be the $q$-th homology theory of the chain complex $C^{cell}_\bullet ( K, K^\prime ; A)$.
It is formally a classical result that the object $H^{cell}_q ( K , K^\prime  ; A)$ does not depend on the choice of the CW-complex structure on the pair $(K, K^\prime)$ (up to an isomorphism).
We define $H_q (K , K^\prime ; A) \stackrel{\mathrm{def.}}{=} H^{cell}_q ( K , K^\prime  ; A)$.
The assignment $(K,K^\prime) \mapsto H_q (K , K^\prime ; A)$ induces an $\mathcal{A}$-valued homology theory.
It is ordinary by definition.

We apply the above construction to the category of bicommutative Hopf algebras $\mathcal{A} = \mathsf{Hopf}^\mathsf{bicom}_k$.
Then we obtain an ordinary homology theory with coefficients in {\it a bicommutative Hopf algebra} $A$.

\begin{prop}
Let $G$ be an abelian group and $H_\bullet (- ; G)$ the ordinary homology theory with coefficients in the abelian group $G$.
Let $kH_\bullet ( - ; G)$ be a $ \mathsf{Hopf}^\mathsf{bicom}_k$-valued homology theory induced by the group Hopf algebra functor (see subsection \ref{202005031015}).
Then we have an isomorphism of homology theories,
\begin{align}
H_\bullet (-; kG) \cong kH_\bullet (-;G) .
\end{align}
\end{prop}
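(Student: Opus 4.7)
Both sides of the claimed isomorphism arise from cellular chain complexes (the left via the general $\mathcal{A}$-valued construction of subsection~\ref{202005071101} applied to $\mathcal{A} = \mathsf{Hopf}^\mathsf{bicom}_k$, the right via the classical construction in $\mathsf{Ab}$ followed by the functor $k(-)$). The strategy is to show that the group Hopf algebra functor carries the cellular chain complex in $\mathsf{Ab}$ termwise to the cellular chain complex in $\mathsf{Hopf}^\mathsf{bicom}_k$, and then to invoke exactness to commute $k(-)$ past the taking of homology.

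First I would record two properties of $k(-) : \mathsf{Ab} \to \mathsf{Hopf}^\mathsf{bicom}_k$. It is exact, as used already in subsection~\ref{202005031015}, and in particular it is additive. Moreover, the biproduct in the abelian category $\mathsf{Hopf}^\mathsf{bicom}_k$ is the tensor product $\otimes_k$ of Hopf algebras (abelian group objects in any category with products have biproduct equal to the product, applied here to cocommutative coalgebras whose product is $\otimes_k$). Combining this with the elementary isomorphism $k(G_1 \oplus G_2) \cong kG_1 \otimes_k kG_2$ of Hopf algebras, one obtains that $k(-)$ preserves finite biproducts: for every $n$, there is a natural isomorphism $k(G^{\oplus n}) \cong (kG)^{\oplus n}$, the right-hand biproduct being taken in $\mathsf{Hopf}^\mathsf{bicom}_k$.

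Next, fix a CW-complex structure on the pair $(K,K^\prime)$. By the previous step, $k(-)$ sends $C^{cell}_q(K;G) = G^{\oplus n_q}$ to $C^{cell}_q(K;kG) = (kG)^{\oplus n_q}$ naturally in $q$. Additivity of $k(-)$ implies that it commutes with integer multiples of morphisms in the abelian category sense, so the $(e_{q-1},e_q)$-entry $[e_q : e_{q-1}]\cdot Id_G$ of the boundary $\partial_q$ is sent to $[e_q : e_{q-1}]\cdot Id_{kG}$, which is the corresponding entry of the boundary of $C^{cell}_\bullet(K;kG)$. Therefore $k(-)$ carries the absolute cellular chain complex $C^{cell}_\bullet(K;G)$ to $C^{cell}_\bullet(K;kG)$. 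Since $C^{cell}_\bullet(K,K^\prime;A)$ is defined as a cokernel and $k(-)$ preserves cokernels, the same holds for relative complexes: $k\, C^{cell}_\bullet(K,K^\prime;G) \cong C^{cell}_\bullet(K,K^\prime;kG)$ naturally.

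Finally, exactness of $k(-)$ implies that it commutes with the homology of a chain complex in an abelian category, so we obtain natural isomorphisms $k H^{cell}_q(K,K^\prime;G) \cong H^{cell}_q(K,K^\prime;kG)$, i.e.\ $kH_q(K,K^\prime;G) \cong H_q(K,K^\prime;kG)$. Naturality in $(K,K^\prime)$ and compatibility with the connecting homomorphisms $\partial_q$ follow from the cokernel description of the relative complex together with functoriality of the cellular construction. I expect no genuine obstacle; the only point that deserves care is the identification of the biproduct in $\mathsf{Hopf}^\mathsf{bicom}_k$ with the tensor product and the compatibility of $k(-)$ with it, but this reduces to the additive structure on Hom-sets in an abelian category.
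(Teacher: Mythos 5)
The paper states this proposition without giving any proof, so there is no argument of the author's to compare yours against; it is evidently regarded as routine. Your argument is correct and is the natural way to fill the gap: both sides are computed from cellular chain complexes, the biproduct in $\mathsf{Hopf}^\mathsf{bicom}_k$ is the tensor product (consistent with the paper's use of $\otimes$ as the monoidal/biproduct structure, e.g.\ in $\mathcal{D}(A)=A\otimes A^\vee$ and in $(A,\alpha)^{L\oplus\cdots\oplus L}\cong A\otimes\cdots\otimes A$), the isomorphism $k(G^{\oplus n})\cong (kG)^{\otimes n}$ identifies the chain groups, additivity of $k(-)$ identifies the boundary matrices entrywise (note that $n\cdot Id_{kG}$ in the convolution sense is exactly the Hopf map induced by $g\mapsto g^n$, i.e.\ $k(n\cdot Id_G)$), and exactness of $k(-)$, already asserted in subsection~\ref{202005031015}, lets you pass to relative complexes via cokernels and commute $k(-)$ with homology. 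The only small points you gloss over are the ones you flag yourself, and they do reduce to standard facts about abelian group objects and additive functors; I see no genuine gap.
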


\begin{remark}
By subsection \ref{202005031017}, there are nontrivial Hopf algebras for an appropriate field $k$.
For such a field $k$, there is a $ \mathsf{Hopf}^\mathsf{bicom}_k$-valued ordinary homology theory which could not be obtained from subsection \ref{202005031015}.
\end{remark}

\subsection{Construction by exponential functors}
\label{202005071102}

\begin{Defn}
\rm
Let $A$ be a bicommutative Hopf algebra.
We define a unital ring $End(A)$ as follows.
Its underlying set is given by the set of Hopf endomorphisms on $A$.
Its abelian group structure is defined by the convolution $\ast$ of Hopf endomorphisms : for Hopf endomorphisms $f,g$ on $A$, we define the convolution $f \ast g$, which is also a Hopf endomorphism, by 
\begin{align}
f \ast g \stackrel{\mathrm{def.}}{=} \nabla_A \circ (f \otimes g) \circ \Delta_A .
\end{align}
The multiplication on $End(A)$ is the composition of endomorphisms.
The identity on $A$ is the unit of the ring.
\end{Defn}

\begin{Defn}
\rm
For a ring $R$, {\it an $R$-action on a bicommutative Hopf algebra $A$} is a ring homomorphism $\alpha$ from $R$ to $End(A)$.
\end{Defn}

\begin{prop}
\label{202005011630}
Denote by $\mathsf{P}_R$ the category of finitely-generated projective modules over $R$ for a ring $R$.
The category of bicommutative Hopf algebras with an $R$-action is equivalent with the category of symmetric monoidal functors from $( \mathsf{P}_R , \oplus )$ to $( \mathsf{Vec}_k , \otimes )$.
\end{prop}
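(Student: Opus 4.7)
The plan is to leverage the observation that $(\mathsf{P}_R, \oplus)$, having biproducts, canonically equips every object with a bicommutative Hopf algebra structure: for any $P \in \mathsf{P}_R$ the diagonal $\Delta_P : P \to P \oplus P$ is a cocommutative comultiplication, the fold $\nabla_P : P \oplus P \to P$ is a commutative multiplication, the zero morphisms give unit and counit, and $-\mathrm{id}_P$ is an antipode. Since every morphism in an additive category is automatically additive, every morphism of $\mathsf{P}_R$ between biproduct Hopf objects is a Hopf homomorphism with respect to these structures. In particular, $\mathrm{End}_{\mathsf{P}_R}(R) \cong R$ may be identified as a ring with the Hopf endomorphism ring of $R$ in the sense of the paper, where biproduct addition corresponds to convolution and composition to composition.

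For the forward functor, given a symmetric monoidal functor $F : (\mathsf{P}_R , \oplus) \to (\mathsf{Vec}_k , \otimes)$, set $A \stackrel{\mathrm{def.}}{=} F(R)$. Symmetric monoidal functors preserve commutative and cocommutative Hopf algebra objects, so $A$ inherits a bicommutative Hopf algebra structure in $\mathsf{Vec}_k$. The action of $F$ on endomorphisms then yields a ring homomorphism $R \cong \mathrm{End}_{\mathsf{P}_R}(R) \to End(A)$; the key point is that the convolution $f \ast g = \nabla_A \circ (f \otimes g) \circ \Delta_A$ in the target is exactly the image of $\nabla_R \circ (f \oplus g) \circ \Delta_R$ in the source, which is the biproduct sum of $f,g \in \mathrm{End}_{\mathsf{P}_R}(R)$. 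This produces the $R$-action $\alpha$ on $A$.

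For the reverse functor, given a bicommutative Hopf algebra $A$ over $k$ with an $R$-action $\alpha$, I first define $F$ on free modules by $F(R^n) = A^{\otimes n}$. A morphism $R^n \to R^m$ in $\mathsf{P}_R$ is an $m \times n$ matrix $(r_{ij})$ over $R$, and I send it to the Hopf morphism $A^{\otimes n} \to A^{\otimes m}$ determined, via the biproduct structure $\otimes$ on $\mathsf{Hopf}^\mathsf{bicom}_k$, by the matrix $(\alpha(r_{ij})) \in M_{m \times n}(End(A))$. The assignment is functorial because matrix multiplication corresponds to composition of such Hopf morphisms, and it is symmetric monoidal by construction. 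I then extend $F$ to all of $\mathsf{P}_R$ by splitting idempotents, which is possible because $\mathsf{Vec}_k$ is Karoubi complete and every finitely generated projective $R$-module is the image of an idempotent endomorphism of some $R^n$.

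The main obstacle is verifying that these two constructions are mutually quasi-inverse and that the correspondence extends to morphisms, matching monoidal natural transformations on one side with $R$-equivariant Hopf homomorphisms on the other. Essential surjectivity and full faithfulness both reduce to the heuristic universal property that $(\mathsf{P}_R , \oplus)$ is the free symmetric monoidal category on a commutative bialgebra object carrying an $R$-action. Much of the verification is bookkeeping, but care is required at the Karoubi extension step and in showing that a monoidal natural transformation between two such functors is determined by its value at $R \in \mathsf{P}_R$, which uses that every object of $\mathsf{P}_R$ is a retract of some $R^n$.
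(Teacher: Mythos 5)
Your argument is essentially correct, and it is worth noting that the paper itself gives no argument here: its proof of this proposition is the single sentence ``It follows from \cite{touze}.'' What you have written out is, in substance, the standard proof of the classification of exponential functors that this citation points to, so you are not taking a different route so much as supplying the route the paper outsources. Both pillars of your sketch are sound: in $(\mathsf{P}_R,\oplus)$ every object carries a unique bicommutative Hopf structure (diagonal, fold, zero maps, $-\mathrm{id}$ as antipode) and every morphism is automatically a Hopf morphism, so a strong symmetric monoidal functor transports this to a bicommutative Hopf algebra $F(R)$ in $(\mathsf{Vec}_k,\otimes)$ together with a ring homomorphism $\mathrm{End}_{\mathsf{P}_R}(R)\to End(F(R))$ carrying biproduct addition to convolution; conversely, because $\otimes$ is the biproduct in $\mathsf{Hopf}^\mathsf{bicom}_k$, matrices over $End(A)$ define the functor on free modules and idempotent splitting handles general projectives. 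Two cautions. First, $\mathrm{End}_{\mathsf{P}_R}(R)\cong R^{\mathrm{op}}$ rather than $R$ if $\mathsf{P}_R$ means left modules, so as set up your equivalence matches symmetric monoidal functors with $R^{\mathrm{op}}$-actions; this is invisible in the paper's applications, where $R=L$ is a field, but it should be fixed by a choice of handedness (or by using right modules). Second, your closing appeal to the ``free symmetric monoidal category on a commutative bialgebra object with $R$-action'' is circular as a justification---that universal property is essentially the statement being proved---but it is also unnecessary: essential surjectivity follows from your explicit reverse construction, and full faithfulness follows from the observation you already make, namely that a monoidal natural transformation is determined by its component at $R$ (monoidality handles $R^{\oplus n}$, naturality handles retracts), and that such a component is precisely an $R$-equivariant Hopf homomorphism. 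With those adjustments the sketch closes up into a complete proof, and it is more informative than what the paper provides.
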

\begin{proof}
It follows from \cite{touze}.
\end{proof}

\begin{remark}
The symmetric monoidal functor in Proposition \ref{202005011630} is called an {\it exponential functor}.
\end{remark}

\begin{Defn}
\rm
Let $L$ be another field.
Let $A$ be a bicommutative Hopf algebra over $k$ with an $L$-action $\alpha$.
Denote by $J : ( \mathsf{Vec}^\mathsf{fin}_L , \oplus ) \to ( \mathsf{Vec}_k, \otimes )$ the induced symmetric monoidal functor in Proposition \ref{202005011630}.
We denote by $(A,\alpha)^{(-)} : \mathsf{Vec}^\mathsf{fin}_L \to \mathsf{Hopf}^\mathsf{bicom}_k$ the additive functor induced by $J$.
In fact, a bicommutative Hopf monoid in $( \mathsf{Vec}^\mathsf{fin}_L , \oplus)$ is nothing but an object of $\mathsf{Vec}^\mathsf{fin}_L$ since the direct sum $\oplus$ is a biproduct in the category $\mathsf{Vec}^\mathsf{fin}_L$.
Hence, the symmetric monoidal functor $J$ assigns a bicommutative Hopf monoid in $( \mathsf{Vec}_k , \otimes)$ to each object of $\mathsf{Vec}^\mathsf{fin}_L$ in a functorial way.
\end{Defn}

\begin{prop}
\begin{enumerate}
\item
The additive functor $(A,\alpha)^{(-)}$ is an exact functor.
\item
Let $F_\bullet$ be a $\mathsf{Vec}^\mathsf{fin}_L$-valued homology theory.
The functor $(A,\alpha)^{(-)}$ induces a $\mathsf{Hopf}^\mathsf{bicom}_k$-valued homology theory $(A,\alpha)^{F_\bullet}$ by composing the functor $(A,\alpha)^{(-)}$.
\end{enumerate}
\end{prop}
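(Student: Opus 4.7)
The plan is to handle the two parts in sequence, with (1) being the substantive step and (2) being a formal consequence. For (1), I would first exploit the semisimplicity of $\mathsf{Vec}^\mathsf{fin}_L$: since $L$ is a field, every short exact sequence there is isomorphic to a split one, so exactness of the additive functor $(A,\alpha)^{(-)}$ reduces to showing that a sequence $0\to V\to V\oplus W\to W\to 0$ is sent to a short exact sequence in $\mathsf{Hopf}^\mathsf{bicom}_k$.

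Next I would invoke the symmetric monoidal nature of the underlying functor $J:(\mathsf{Vec}^\mathsf{fin}_L,\oplus)\to(\mathsf{Vec}_k,\otimes)$ furnished by Proposition~\ref{202005011630}. This gives a natural Hopf isomorphism
\begin{align}
(A,\alpha)^{V\oplus W}\;\cong\;(A,\alpha)^{V}\otimes_k (A,\alpha)^{W}, \notag
\end{align}
and the structural injections/projections of the biproduct $V\oplus W$ are sent, respectively, to the unit-maps $X\to X\otimes Y$, $Y\to X\otimes Y$ and the counit-maps $X\otimes Y\to X$, $X\otimes Y\to Y$ in $\mathsf{Hopf}^\mathsf{bicom}_k$. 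Now I would invoke the standard fact (following from the Newman–Takeuchi description of $\mathsf{Hopf}^\mathsf{bicom}_k$ used throughout the paper) that in the abelian category $\mathsf{Hopf}^\mathsf{bicom}_k$ the tensor product $\otimes_k$ is the biproduct; together with the previous step this means the image sequence is a split short exact sequence in $\mathsf{Hopf}^\mathsf{bicom}_k$, proving part (1).

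For (2), once exactness is in hand, the verification of the Eilenberg–Steenrod-style axioms from Definition~\ref{201912312136} is formal. I would define $(A,\alpha)^{F_\bullet}$ by $(A,\alpha)^{F_q(K,K')}$ on objects and $(A,\alpha)^{F_q(f)}$ on morphisms, with boundary $(A,\alpha)^{\partial_q}$. Functoriality is immediate from functoriality of $F_q$ and of $(A,\alpha)^{(-)}$. Homotopy invariance follows because $F_q(f)=F_q(g)$ implies equality of the images. The excision isomorphism $F_q(K,K\cap L)\cong F_q(K\cup L,L)$ is carried to an isomorphism since every functor preserves isomorphisms. Finally, the long exact sequence in $\mathsf{Vec}^\mathsf{fin}_L$ is transported to a long exact sequence in $\mathsf{Hopf}^\mathsf{bicom}_k$ by exactness established in (1), applied termwise (exact functors between abelian categories preserve long exact sequences via the usual splicing of short exact sequences of kernels and images).

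The main obstacle is the conceptual step inside part (1): pinning down that $\otimes_k$ plays the role of the biproduct in $\mathsf{Hopf}^\mathsf{bicom}_k$ and that the coherence data of $J$ as a symmetric monoidal functor matches the biproduct data on both sides. Everything else is essentially bookkeeping. I do not anticipate the need to manipulate chain-level data or to unpack exponential functors beyond what Proposition~\ref{202005011630} already provides.
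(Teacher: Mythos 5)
Your proposal is correct and follows essentially the same route as the paper: both reduce exactness to the fact that every short exact sequence in $\mathsf{Vec}^\mathsf{fin}_L$ splits and that an additive functor out of such a category preserves (split) exactness, and both treat part (2) as a formal consequence. Your extra step identifying $\otimes_k$ as the biproduct in $\mathsf{Hopf}^\mathsf{bicom}_k$ via the monoidal coherence of $J$ is just an explicit unpacking of the general lemma the paper invokes, and it is sound.
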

\begin{proof}
The second part is immediate from the first part.
We sketch the proof of the first part.
Note that any short exact sequence in $\mathcal{A} = \mathsf{Vec}^\mathsf{fin}_L$ splits.
In general, for an arbitrary abelian category $\mathcal{A}$ where any short exact sequence splits, an additive functor $F : ( \mathcal{A} , \oplus ) \to ( \mathcal{B} , \oplus )$ is an exact functor for any abelian category $\mathcal{B}$.
\end{proof}

\begin{prop}
\label{202005061633}
Suppose that $\dim_k A > 1$.
For a $\mathsf{Vec}^\mathsf{fin}_L$-valued homology theory $F_\bullet$, the $\mathsf{Hopf}^\mathsf{bicom}_k$-valued homology theory $(A,\alpha)^{F_\bullet}$ is ordinary if and only if $F_\bullet$ is ordinary.
\end{prop}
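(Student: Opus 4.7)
The plan is to reduce the biconditional to a single property of the exponential functor, namely that it reflects zero objects, and then prove that reflection by a dimension count.

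First I would unpack the definition of ``ordinary'': $(A,\alpha)^{F_\bullet}$ is ordinary iff $(A,\alpha)^{F_q(\mathrm{pt})}$ is the zero object of $\mathsf{Hopf}^\mathsf{bicom}_k$ for each $q\neq 0$, and $F_\bullet$ is ordinary iff $F_q(\mathrm{pt})=0$ in $\mathsf{Vec}^\mathsf{fin}_L$ for each $q\neq 0$. Hence it suffices to prove that for every $V\in \mathsf{Vec}^\mathsf{fin}_L$,
\[
(A,\alpha)^V \cong 0 \text{ in } \mathsf{Hopf}^\mathsf{bicom}_k
\quad \Longleftrightarrow \quad
V \cong 0 \text{ in } \mathsf{Vec}^\mathsf{fin}_L.
\]

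Next I would identify the zero object in $\mathsf{Hopf}^\mathsf{bicom}_k$. The ground field $k$, regarded as a bicommutative Hopf algebra with trivial structure, is both initial and terminal (the unique maps being the unit and the counit), so it is the zero object; in particular, a bicommutative Hopf algebra is the zero object iff it is one-dimensional over $k$. Thus the task reduces to showing that $\dim_k (A,\alpha)^V = 1$ iff $\dim_L V = 0$.

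For the dimension count I would exploit that $(A,\alpha)^{(-)}$ is, by construction, the additive functor obtained from the symmetric monoidal exponential functor $J\colon (\mathsf{Vec}^\mathsf{fin}_L,\oplus)\to (\mathsf{Vec}_k,\otimes)$ of Proposition \ref{202005011630}, with $J(L)=A$ and $J(0)=k$. Any object $V\in\mathsf{Vec}^\mathsf{fin}_L$ is isomorphic to $L^{\oplus n}$ for $n=\dim_L V$, so by the monoidal property of $J$ (and its preservation of isomorphisms) we get $J(V)\cong A^{\otimes n}$ in $\mathsf{Vec}_k$, giving
\[
\dim_k (A,\alpha)^V \;=\; (\dim_k A)^{\dim_L V}.
\]
Since $\dim_k A > 1$ by hypothesis, the right-hand side equals $1$ iff $\dim_L V = 0$, which gives the desired equivalence and completes the proof.

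The main obstacle is nothing deep; it lies in making sure that the zero object of $\mathsf{Hopf}^\mathsf{bicom}_k$ is correctly identified with the one-dimensional Hopf algebra $k$, and that the symmetric monoidal functor $J$ really translates direct sums of $L$-vector spaces into tensor products of Hopf algebras over $k$, so that the above dimension formula is legitimate. Both points are immediate from the construction of $(A,\alpha)^{(-)}$ via an exponential functor, and the hypothesis $\dim_k A > 1$ is used precisely (and only) to guarantee that the map $n\mapsto (\dim_k A)^n$ is injective on nonnegative integers, so that no nonzero $V$ can yield a one-dimensional Hopf algebra.
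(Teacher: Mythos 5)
Your proposal is correct and follows essentially the same route as the paper's (sketched) proof: the paper likewise uses the isomorphism $(A,\alpha)^{L\oplus\cdots\oplus L}\cong A\otimes\cdots\otimes A$ to conclude that $F_q(\mathrm{pt})=0$ if and only if $(A,\alpha)^{F_q(\mathrm{pt})}$ is one-dimensional, using $\dim_k A>1$. Your write-up merely fills in the details the paper leaves implicit, such as the identification of the zero object of $\mathsf{Hopf}^\mathsf{bicom}_k$ with the one-dimensional Hopf algebra $k$.
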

\begin{proof}
We sketch the proof.
Note that we have a natural isomorphism $(A, \alpha )^{L \oplus \cdots \oplus L} \cong A \otimes \cdots \otimes A$.
Hence, $F_q ( \mathrm{pt} )$ is the zero vector space if and only if $(A, \alpha )^{F_q ( \mathrm{pt})}$ is a one-dimensional Hopf algebra since $\dim_k A > 1$.
\end{proof}

We give a typical example of a bicommutative Hopf algebra with an $L$-action for $L = \mathbb{F}_p$ for a prime number $p$.
We compute the ring $End(A)$ for some $A$.

\begin{Lemma}
\label{202005040829}
Let $G$ be a finite abelian group.
For the group Hopf algebra $A =kG$, the ring $End(A)$ is isomorphic to the ring consisting of group endomorphisms on $G$.
We denote the latter one by $End(G)$.

The analogous statement is true.
For the function Hopf algebra $A=k^G$, the ring $End(A)$ is isomorphic to the opposite ring of $End(G)$.
\end{Lemma}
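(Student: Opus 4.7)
The plan is to exploit the full-faithfulness result already established in Proposition \ref{202005031107} to identify the underlying sets, and then verify separately that the ring operations (convolution and composition) on $End(A)$ correspond to the expected operations on $End(G)$.

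First I would treat the group Hopf algebra case $A = kG$. Since $G$ is abelian, the Hopf algebra $kG$ is bicommutative, and Proposition \ref{202005031107} gives a bijection
\[
k(-) : End(G) \xrightarrow{\ \sim\ } \{\text{Hopf endomorphisms of } kG\}, \qquad \phi \mapsto k\phi,
\]
at the level of underlying sets. Functoriality of $k(-)$ immediately yields $k(\phi \circ \psi) = k\phi \circ k\psi$, so the multiplicative (composition) structure is respected. For the additive structure I would evaluate the convolution on a group-like element $g \in G \subset kG$: using $\Delta(g) = g \otimes g$,
\[
(k\phi \ast k\psi)(g) = \nabla\bigl((k\phi)(g) \otimes (k\psi)(g)\bigr) = \phi(g) \cdot \psi(g),
\]
where the product on the right is taken in $G$. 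Under the standard additive notation on $End(G)$ (coming from the abelian group structure of $G$), this is exactly $(\phi + \psi)(g)$. Since $kG$ is spanned by group-likes, $k\phi \ast k\psi = k(\phi + \psi)$, establishing the ring isomorphism $End(kG) \cong End(G)$.

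For the function Hopf algebra case $A = k^G$, the same strategy applies but now via the contravariant fully faithful functor $k^{(-)}$ (Proposition \ref{202005031107}, second part). Each group endomorphism $\phi : G \to G$ gives a Hopf endomorphism $k^\phi : k^G \to k^G$ with $k^\phi(f) = f \circ \phi$, i.e.\ on the dual basis $\{e_g\}_{g \in G}$,
\[
k^\phi(e_a) = \sum_{h \,:\, \phi(h)=a} e_h.
\]
Contravariance gives $k^\phi \circ k^\psi = k^{\psi \circ \phi}$, which is precisely the opposite multiplication on $End(G)$. For the convolution, I would compute
\[
(k^\phi \ast k^\psi)(e_g) \;=\; \sum_{ab = g} k^\phi(e_a) \cdot k^\psi(e_b) \;=\; \sum_{h \,:\, \phi(h)\psi(h) = g} e_h \;=\; k^{\phi + \psi}(e_g),
\]
using $e_h \cdot e_{h'} = \delta_{h,h'} e_h$ in $k^G$ to collapse the double sum. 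Thus convolution again corresponds to the pointwise sum in $End(G)$, and we obtain the ring isomorphism $End(k^G) \cong End(G)^{\mathrm{op}}$.

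Neither step presents a serious obstacle; the mild pitfall is keeping track of the indexing in the computation of $k^\phi(e_a)$ and the reduction of the double sum appearing in the convolution, which is where a careless bookkeeping slip could conceal the fact that the resulting operation is the sum in $End(G)$ rather than something else. Once these two direct calculations are recorded, the lemma follows immediately from Proposition \ref{202005031107}.
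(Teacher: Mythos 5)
Your proposal is correct and follows essentially the same route as the paper, whose entire proof is the single line ``It is immediate from Proposition \ref{202005031107}''; you have simply made explicit the verifications (composition vs.\ composition, convolution vs.\ pointwise sum) that the paper leaves to the reader. The computations you record, including the identification of convolution on $k^G$ with the pointwise sum via the collapse $e_h \cdot e_{h'} = \delta_{h,h'} e_h$, are accurate.
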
 
\begin{proof}
It is immediate from Proposition \ref{202005031107}, 
\end{proof}

\begin{prop}
\label{202005061416}
Suppose that a finite-dimensional bicommutative Hopf algebra $A$ is cosemisimple (resp. semisimple).
Then there exists a finite abelian group $G$ such that the order $|G|$ coincides with the dimension $\dim_k A$ and the unital ring $End(A)$ is a subring of $End(G)$ (resp. the opposite ring of $End(G)$).
\end{prop}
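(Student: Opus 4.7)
The plan is to reduce to the algebraically closed case by base change, apply Corollary \ref{202005061120} to identify the base-changed Hopf algebra with a group Hopf algebra, and then use Lemma \ref{202005040829} to identify its endomorphism ring. I treat the cosemisimple case first and deduce the semisimple case by duality.

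Let $\bar{k}$ be an algebraic closure of $k$ and set $\bar{A} := A \otimes_k \bar{k}$. Since (co)semisimplicity of a finite-dimensional Hopf algebra is detected by the nonvanishing of the counit (resp.\ unit) on an integral (resp.\ cointegral), and the one-dimensional space of integrals survives base extension, $\bar{A}$ is a finite-dimensional bicommutative cosemisimple Hopf algebra over $\bar{k}$. By Corollary \ref{202005061120} there exists a finite abelian group $G$ with $|G| = \dim_{\bar{k}} \bar{A} = \dim_k A$ and a Hopf isomorphism $\bar{A} \cong \bar{k} G$. I then define $\Phi : End(A) \to End(G)$ as the composition of the base-change map $f \mapsto f \otimes Id_{\bar{k}}$, the ring isomorphism $End(\bar{A}) \cong End(\bar{k}G)$ induced by the Hopf isomorphism $\bar{A} \cong \bar{k}G$, and the isomorphism $End(\bar{k}G) \cong End(G)$ from Lemma \ref{202005040829}. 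The latter two maps are ring isomorphisms; the base-change map is a ring homomorphism because $- \otimes_k \bar{k}$ commutes with $\nabla_A, \Delta_A, \eta_A, \epsilon_A$ and so preserves both the convolution $f \ast g = \nabla_A \circ (f \otimes g) \circ \Delta_A$ and composition, and it is injective by faithful flatness of $\bar{k}$ over $k$. This exhibits $End(A)$ as a subring of $End(G)$.

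For the semisimple case I apply the cosemisimple case to the dual $A^\vee$, which is finite-dimensional bicommutative cosemisimple with $\dim_k A^\vee = \dim_k A$. This yields a finite abelian group $G$ with $|G| = \dim_k A$ and an injective ring homomorphism $End(A^\vee) \hookrightarrow End(G)$. Dualization is an anti-isomorphism $End(A) \cong End(A^\vee)^{\mathrm{op}}$: it reverses composition, $(f \circ g)^\vee = g^\vee \circ f^\vee$, while preserving convolution, $(f \ast g)^\vee = f^\vee \ast g^\vee$, since taking duals swaps the roles of $\nabla$ and $\Delta$. Composing yields an embedding of $End(A)$ into the opposite ring $End(G)^{\mathrm{op}}$.

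The step that genuinely requires care, rather than simple invocation of the earlier results, is the bookkeeping around the twin ring structures on $End(-)$: one must verify that base change preserves both convolution-addition and composition-multiplication, and that dualization preserves convolution while reversing composition. Everything else follows either directly from Corollary \ref{202005061120} and Lemma \ref{202005040829} or from the standard fact that (co)semisimplicity of finite-dimensional Hopf algebras is preserved under field extensions.
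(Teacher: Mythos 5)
Your proof is correct and follows essentially the same route as the paper: extend scalars to $\bar{k}$, note that the normalized (co)integral survives base change, identify $\bar{k}\otimes_k A$ with a Hopf algebra coming from a finite abelian group via Proposition \ref{202002150938}/Corollary \ref{202005061120}, and transport $End(A)$ through the injection $End(A)\hookrightarrow End(\bar{k}\otimes_k A)\cong End(G)$ using Lemma \ref{202005040829}. You are in fact slightly more careful than the paper's sketch: you correctly identify the base-changed cosemisimple cocommutative Hopf algebra as a \emph{group} Hopf algebra $\bar{k}G$ (the paper's text says ``function Hopf algebra $\bar{k}^G$'' yet concludes $End\cong End(G)$, which is consistent only with the group Hopf algebra reading), and you make explicit the duality argument $End(A)\cong End(A^\vee)^{\mathrm{op}}$ for the semisimple case, which the paper leaves implicit.
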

\begin{proof}
We consider the case that $A$ is cosemisimple.
Let $\bar{k}$ be the algebraic closure of the field $k$.
Denote by $\bar{k} \otimes_k A$ the Hopf algebra over $\bar{k}$ obtained by coefficient extension.
Then the extension of coefficients induces an injection $End(A) \to End ( \bar{k} \otimes_k A )$.
The field extension is also cosemisimple since it has a normalized cointegral which is obtained by the coefficient extension of that of $A$.
By Proposition \ref{202002150938}, the Hopf algebra $\bar{k} \otimes_k A$ is a function Hopf algebra $\bar{k}^G$ for a finite abelian group $G$.
We have an isomorphism of rings $End ( \bar{k} \otimes_k A ) \cong End(G)$ by Lemma \ref{202005040829}.
It proves the claim.
\end{proof}

\begin{Corollary}
\label{202005061413}
Suppose that a finite-dimensional bicommutative Hopf algebra $A$ is cosemisimple or semisimple.
If the dimension of $A$ is a prime number $p$, then $A$ has a canonical nontrivial $\mathbb{F}_p$-action.
\end{Corollary}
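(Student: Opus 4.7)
The plan is to feed the hypotheses directly into Proposition \ref{202005061416}. Since $\dim_k A = p$ is prime, the finite abelian group $G$ produced by that proposition has order $p$, so $G \cong \mathbb{Z}/p\mathbb{Z}$. Hence $\mathrm{End}(G) \cong \mathbb{Z}/p\mathbb{Z} \cong \mathbb{F}_p$ as a unital ring, and since $\mathbb{F}_p$ is commutative the opposite ring is the same ring. Therefore, regardless of whether $A$ is cosemisimple or semisimple, the proposition provides an injection of unital rings $\mathrm{End}(A) \hookrightarrow \mathbb{F}_p$.

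Next I would argue that this inclusion is forced to be an isomorphism. The key observation is that $\mathbb{F}_p$ admits no proper unital subring: the subring generated by the multiplicative identity is already all of $\mathbb{F}_p$, because $1$ has additive order $p$. Since the inclusion $\mathrm{End}(A) \hookrightarrow \mathbb{F}_p$ is unital, its image contains $1$ and is hence everything. So the inclusion canonically identifies $\mathrm{End}(A)$ with $\mathbb{F}_p$, and this identification itself serves as the required ring homomorphism $\alpha : \mathbb{F}_p \to \mathrm{End}(A)$, giving the canonical $\mathbb{F}_p$-action. Canonicity is unambiguous because there is only one unital ring homomorphism $\mathbb{F}_p \to \mathbb{F}_p$.

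It remains to verify nontriviality. Under $\alpha$, the element $1 \in \mathbb{F}_p$ goes to $\mathrm{id}_A$, while the additive zero of $\mathrm{End}(A)$ is the convolution zero $\eta_A \circ \epsilon_A$ (the composition of counit with unit). These differ whenever $\dim_k A > 1$, which holds since $\dim_k A = p \geq 2$; thus $\alpha$ is not the zero homomorphism. I do not expect any real obstacle: Proposition \ref{202005061416} carries all the content, and the only thing to check by hand is the elementary fact that $\mathbb{F}_p$ has no proper unital subring. The one place to tread carefully is the case distinction between cosemisimple and semisimple, but the commutativity of $\mathbb{F}_p$ collapses the two cases, so no separate argument is needed.
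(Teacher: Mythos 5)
Your proposal is correct and follows essentially the same route as the paper: the paper's proof is the one-line observation that Proposition \ref{202005061416} forces $End(A)$ to be a finite field of order $p$, which is exactly the conclusion you reach after noting that $G\cong \mathbb{Z}/p$ and that $\mathbb{F}_p$ has no proper unital subring. Your additional checks (commutativity collapsing the semisimple/cosemisimple cases, and nontriviality via $\mathrm{id}_A\neq \eta_A\circ\epsilon_A$) are details the paper leaves implicit.
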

\begin{proof}
The ring $End(A)$ is a finite field with order $p$ by Proposition \ref{202005061416}.
\end{proof}

\begin{Example}
Consider $k= \mathbb{R}$.
Let $A = \mathcal{D} ( \mathbb{R}( \mathbb{Z}/q ) )$ for an odd prime $q$.
We have $\mathbb{F}_q \times \mathbb{F}_q \cong End (\mathbb{R} ( \mathbb{Z} /q )) \times End (\mathbb{R}^{\mathbb{Z} / q}) \subset End (\mathbb{R} ( \mathbb{Z} /q ) \otimes \mathbb{R}^{\mathbb{Z} / q}) = End (A)$.
The field $L = \mathbb{F}_q$ diagonally acts on $A$ via this inclusion.
By $A = \mathcal{D}( \mathbb{R}( \mathbb{Z}/q ) )$, any $\mathsf{Vec}^\mathsf{fin}_{\mathbb{F}_q}$-valued homology theory induces a $\mathsf{Hopf}^\mathsf{bicom}_{\mathbb{R}}$-valued homology theory.
The Hopf algebra $A$ is neither a group Hopf algebra nor a function Hopf algebra by Corollary \ref{202005081432}.
Thus the induced $\mathsf{Hopf}^\mathsf{bicom}_{\mathbb{R}}$-valued homology theory has coefficients which are neither group Hopf algebra nor a function Hopf algebra.
\end{Example}

\begin{Example}
Consider an arbitrary bicommutative Hopf algebra $A$ in Corollary \ref{202005061413}.
Then we obtain an assignment of a $\mathsf{Hopf}^\mathsf{bicom}_k$-valued homology theory $E_\bullet = A^{F_\bullet}$ to a $\mathsf{Vec}^\mathsf{fin}_{\mathbb{F}_p}$-valued homology theory $F_\bullet$.
By Proposition \ref{202005061633}, if $F_\bullet$ is extraordinary, then the induced homology theory $E_\bullet$ is extraordinary.
Especially, if $A$ is a nontrivial Hopf algebra, then $E_\bullet$ is an extraordinary homology theory whose coefficients are neither a group Hopf algebra functor nor a function Hopf algebra.

For example, consider $k = \mathbb{F}_3$.
Let $A = D_1, D_2$ in the appendix.
Since the dimension of $A$ is $p=5$, any $\mathsf{Vec}^\mathsf{fin}_L$-valued homology theory induces a $\mathsf{Hopf}^\mathsf{bicom}_k$-valued homology theory where $L=\mathbb{F}_5$ and $k = \mathbb{F}_3$.
\end{Example}

\begin{remark}
For special case of $(A , \alpha)$, the induced homology theory is naturally isomorphic to those given by previous subsections.
Let $A = k (\mathbb{Z}/p)$ with the canonical $\mathbb{F}_p$-action $\alpha_p$.
Then we have isomorphisms of $\mathsf{Hopf}^\mathsf{bicom}_k$-valued homology theories :
\begin{align}
(k (\mathbb{Z}/p ) , \alpha_p )^{F_\bullet} &\cong k F_\bullet , \\
(k (\mathbb{Z}/p ) , \alpha_p )^{H_\bullet ( - ; \mathbb{Z}/p )} &\cong H_\bullet ( - ; k ( \mathbb{Z} /p ))
\end{align}
Here, $F_\bullet$ is an arbitrary $\mathsf{Vec}^\mathsf{fin}_{\mathbb{F}_p}$-valued homology theory.
$H_\bullet ( - ; \mathbb{Z}/p )$ and $H_\bullet ( - ; k ( \mathbb{Z} /p ))$ denote the singular homology theories with coefficient in the abelian group $\mathbb{Z}/p$ and the Hopf algebra $k ( \mathbb{Z} /p )$ respectively.
\end{remark}

\begin{appendices}

\section{A bicommutative bisemisimple nontrivial Hopf algebra with dimension prime}

We give bicommutative Hopf algebras over $k = \mathbb{F}_3$, $D = D_1,D_2$, whose underlying algebra is the direct sum algebra $\mathbb{F}_3 \oplus \mathbb{F}_9 \oplus \mathbb{F}_9$.
Here, $\mathbb{F}_9 = \mathbb{F}_3 [\omega] / (\omega^2 + 1)$ is a field with order $9$.
It satisfies the following properties :
\begin{itemize}
\item
The Hopf algebra $D$ is not trivial.
\item
The Hopf algebra $D$ is semisimple.
\item
The Hopf algebra $D$ is cosemisimple.
\end{itemize}

For simplicity, we introduce following notations.
\begin{enumerate}
\item
$a = (1, 0 , 0) \in \mathbb{F}_3 \oplus \mathbb{F}_9 \oplus \mathbb{F}_9$
\item
$b_1 = (0, 1, 0) , b_2 = (0, \omega, 0) \in \mathbb{F}_3 \oplus \mathbb{F}_9 \oplus \mathbb{F}_9$
\item
$c_1 = (0,0,1) , c_2 = (0, 0, \omega)\in \mathbb{F}_3 \oplus \mathbb{F}_9 \oplus \mathbb{F}_9$
\end{enumerate}
We give a bicommutative Hopf algebra structure $D_1$ on the algebra $\mathbb{F}_3 \oplus \mathbb{F}_9 \oplus \mathbb{F}_9$ over $\mathbb{F}_3$.
Via the $\mathbb{Z}/2 \mathbb{Z}$-action exchanging $b_1, b_2$ and $c_1, c_2$ respectively, the structure $D_1$ induces another bicommutative Hopf algebra structure $D_2$ on the algebra $\mathbb{F}_3 \oplus \mathbb{F}_9 \oplus \mathbb{F}_9$ over $\mathbb{F}_3$.
By definition of $D_2$, the Hopf algebras $D_1,D_2$ are isomorphic to each other, but no the same.
Moreover, the algebra $\mathbb{F}_3 \oplus \mathbb{F}_9 \oplus \mathbb{F}_9$ over $\mathbb{F}_3$ has only two bicommutative Hopf algebra structures, which are $D_1,D_2$ \footnote{We directly verified it by Python.}.

The structure $D_1$ is given as follows.

\vspace{2mm}
{\it comultiplication.}
\begin{enumerate}
\item
$\Delta (a) = a \otimes a + 2 b_1 \otimes b_1 + 2 b_2 \otimes b_2 + 2 c_1 \otimes c_1 + 2 c_2 \otimes c_2$.
\item
$\Delta (b_1) = ( a \otimes b_1 + b_1 \otimes a )+ (2 b_1 \otimes c_1 +  2 c_1 \otimes b_1 ) + ( b_2 \otimes c_2 + c_2 \otimes b_2  ) +  2 c_1 \otimes c_1 + c_2 \otimes c_2 
$.
\item
$\Delta (b_2 ) = (a \otimes b_2 + b_2 \otimes a) + (b_1 \otimes c_2 + c_2 \otimes b_1 ) + (b_2 \otimes c_1 + c_1 \otimes b_2) + ( 2 c_1 \otimes c_2 + 2 c_2 \otimes c_1)$.
\item
$\Delta (c_1) = (a \otimes c_1  + c_1 \otimes a) + 2 b_1 \otimes b_1 + b_2 \otimes b_2 + (2 b_1 \otimes c_1 + 2 c_1 \otimes b_1) + ( 2 b_2 \otimes c_2 + 2 c_2 \otimes b_2)$.
\item
$\Delta (c_2 ) = ( a \otimes c_2 + c_2 \otimes a) + (b_1 \otimes b_2 + b_2 \otimes b_1) + (b_1 \otimes c_2 + c_2 \otimes b_1) + ( 2 b_2 \otimes c_1 + 2 c_1 \otimes b_2)$.
\end{enumerate}

\vspace{2mm}
{\it counit.}
\begin{enumerate}
\item
$\epsilon ( a ) =1$.
\item
$\epsilon (b_1) = \epsilon (b_2) = \epsilon (c_1 ) = \epsilon (c_2) = 0$.
\end{enumerate}

\vspace{2mm}
{\it antipode.}
\begin{enumerate}
\item
$S(a) = a, ~ S(b_1) = b_1, ~ S(b_2) = -b_2 , ~ S(c_1) = c_1 , ~ S(c_2 ) = -c_2 $. 
\end{enumerate}

A finite-dimensional Hopf algebra is semisimple if and only if it has a normalized integral \cite{LarSwe}.
An element $\sigma \in A$ is an integral of a Hopf algebra $A$ if it satisfies $\sigma \cdot v = \epsilon (v) \cdot \sigma = v \cdot \sigma$ for any $v \in A$.
An integral is normalized if $\epsilon ( \sigma ) = 1 \in k$.
If a normalized exists, then it is unique and we denote by $\sigma_A \in A$.
Dually, the notions of cointegral and normalized cointegral are defined.

The Hopf algebra $D_1$ is semisimple and cosemisimple.
It is deduced from the existence of both of a normalized integral $\sigma_{D_1}$ and a normalized cointegral $\sigma^{D_1}$ :
\begin{align}
\sigma_{D_1} &= a \\
\sigma^{D_1} &= - \delta_a + \delta_{b_1} + \delta_{c_1} .
\end{align}
Here, $\delta$ is the delta functional related with the basis $a,b_1,b_2,c_1,c_2 \in D_1$.

The Hopf algebra $D_1$ is not trivial.
In fact, by direct calculations, the group-like elements are given by 
\begin{align}
GE (D_1) &= \{ \eta \} \\
GE (D^\vee_1 ) &= \{ \epsilon \}
\end{align}
From the group-like elements, we see that the Hopf algebras $D_1$, $D^\vee_1$ are not group Hopf algebras.
In other words, the Hopf algebra $D_1$ is not trivial.

\end{appendices}

\bibliography{Untitled}{}
\bibliographystyle{plain}

\end{document}